\documentclass[a4paper]{amsart}
\usepackage[T1]{fontenc}
 \usepackage[francais, english]{babel}
 \usepackage{amssymb,url,xspace}
 \usepackage{graphicx}
  \usepackage[all]{xy}
\theoremstyle{plain}

\usepackage[all]{xy}

\newtheorem{thm}{Theorem}

\newtheorem{cor}[thm]{Corollary}

\newtheorem{lemma}[thm]{Lemma}

\theoremstyle{definition}

\newtheorem{remark}[thm]{Remark}

\DeclareMathOperator{\SL}{SL}

\def\C{\mathbb C}

\def\sl{\mathfrak{sl}_2}

\begin{document}

\author[K. Diarra]{Karamoko DIARRA}
\address{ DER de Math\'ematiques et d'informatique, FAST, Universit\'e des Sciences, des Techniques et des Technologies de Bamako, BP: E $3206$ Mali.}
\email{diarak2005@yahoo.fr}
 \email{karamoko.diarra@univ-rennes1.fr}
 
\author[F. Loray]{Frank LORAY}
\address{ IRMAR - UMR 6625 du CNRS, Campus de Beaulieu, Universit\'e de Rennes 1, 35042 Rennes Cedex, France.}
\email{frank.loray@univ-rennes1.fr}

\title[Tame isomonodromic solutions]{Ramified covers and tame isomonodromic solutions on curves}

\begin{abstract}In this paper, we investigate the possibility of constructing isomonodromic deformations by ramified covers.
We give new examples and prove a classification result.
\end{abstract}

\subjclass{34M55, 34M56, 34M03}
\keywords{Ordinary differential equations, Isomonodromic deformations, Hurwitz spaces}
\dedicatory{To Yulij Ilyashenko for his $70^{th}$ birthday}
\maketitle
\section*{Introduction}

Let $X$ be a complete curve of genus $g$ over $\mathbb C$ and $D$ be a reduced divisor on $X$:
$D=[x_1]+\cdots+[x_n]$ is equivalent to the data of $n$ distinct points on $X$. Set $N:=3g-3+n$;
when $N>0$, that we will assume along the paper, then $N$ is the dimension of the deformation space 
$M_{g,n}$ of the pair $(X,D)$. 

Let $(E,\nabla)$ be a rank $2$ logarithmic connection over $X$ with polar divisor $D$. In other words, 
$E\to X$ is a rank $2$ holomorphic vector bundle and $\nabla:E\to E\otimes\Omega^1_X(D)$
a linear meromorphic connection having simple poles at the points of $D$. By considering the analytic
continuation of a local basis of $\nabla$-horizontal  sections of $E$, we inherit a monodromy representation
$$\rho_\nabla\ :\ \pi_1(X\setminus D)\to\mathrm{GL}_2(\mathbb C)$$
(which is well-defined up to conjugacy in $\mathrm{GL}_2(\mathbb C)$). 

Given a deformation $t\mapsto (X_t,D_t)$ of the complex structure, there is a unique deformation 
$t\mapsto (X_t,D_t,E_t,\nabla_t)$ up to bundle isomorphism such that the monodromy is constant. 
For $t$ varrying in the Teichmuller space $T_{g,n}$, we get the universal isomonodromic deformation (see \cite{Heu}). 
Considering the moduli space $\mathcal M_{g,n}$ of quadruples $(X,D,E,\nabla)$,
isomonodromic deformations define the leaves of a $N$-dimensional foliation transversal to the natural projection 
$$\mathcal M_{g,n}\to M_{g,n}\ ;\ (X,D,E,\nabla)\mapsto (X,D).$$
The corresponding differential equation is explicitely described in \cite{Krichever} (via local analytic coordinates on $M_{g,n}$)
and is known to be polynomial with respect to the algebraic structure of $\mathcal M_{g,n}$ 
(it is the non-linear Gauss-Manin connection constructed in \cite[section 8]{Simpson}).
In the case $g=0$, we get the Garnier system (see \cite{OkamotoGarnier}), and for $n=4$, the Painlev\'e VI equation. 
Solutions (or leaves) are generically transcendental and it is expected that the transcendence increase with $N$
(see \cite[Introduction]{DubrovinMazzocco} for instance). However, there are some tame solutions. 

Algebraic solutions of Painlev\'e VI equation were recently classified in \cite{Boalch,LisovyyTykhyy}.
Some algebraic solutions are constructed in \cite{DiarraBoletim} for the Garnier case; 
see the discussion in the introduction of \cite{DiarraToulouse} for higher genus case.

Some solutions, called ``classical'', reduce to solutions of linear differential equations. 
They are classified in the Painlev\'e case in \cite{Watanabe}. In the Garnier case,
such solutions arise by considering deformations of reducible connections (see \cite{OkamotoKimura,MazzoccoGarnier}):
they can be expressed in terms of Lauricella hypergeometric functions.

There are also ``tame solutions'' coming from simpler isomonodromy equations (e.g. with lower $n$ or $g$) . 
In \cite{MazzoccoGarnier}, it is proved that, one way of reducing $n$ (when $g=0$) is to consider those deformations having scalar 
local monodromy around some pole. There is another way of reduction, by using ramified covers, 
and this is what we want to investigate in this note.

\section{Known constructions via ramified covers}\label{SecKnownConstructions}

Ramified covers of curves have already been used to construct algebraic solutions of the Painlev\'e VI equation 
(see \cite{Doran,AndreevKitaev}) and Garnier systems (see \cite{DiarraBoletim}).
But they have also been used to understand relations between transcendental solutions. 

\subsection{}\label{SecQuadratic}The most classical 
case is the {\bf quadratic transformation} of the Painlev\'e VI equation 
(see \cite{KitaevQuadratic,Manin,TsudaOkamotoSakai,MazzoccoVidunas}).
We consider a deformation $t\mapsto(E_t,\nabla_t)$ of a rank $2$ connection on $\mathbb P^1$
with simple poles at $(x_1,x_2,x_3,x_4)=(0,1,t,\infty)$. At a pole $x_i$, we consider {\it eigenvalues} $\theta_i^1,\theta_i^2$ of the residual matrix
and call {\it exponent} the difference $\theta_i:=\theta_i^1-\theta_i^2$ (defined up to a sign).
To be concrete, if all $\theta_i^1+\theta_i^2=0$ and the connection is irreducible, then $E_t$ is the trivial bundle except for 
a discrete set of parameters (see \cite{Bolibruch}) and the connection is just defined by a two-by-two system.
If moreover exponents satisfy
$$\theta_0=\theta_\infty=\frac{1}{2}$$
then after lifting the connection on the two-fold cover 
$$\mathbb P_{\tilde x}^1\to\mathbb P_{x}^1\ ;\ \tilde x\mapsto \tilde x^2$$
we get a connection $(\tilde E_t^0,\tilde\nabla_t^0)$ having $6$ simple poles at 
$$\tilde x=0,\ \pm1,\ \pm\sqrt{t}\ \text{and}\ \infty$$
(see figure \ref{dessin1}).

\begin{figure}[htbp]
\begin{center}
\includegraphics[scale=0.8]{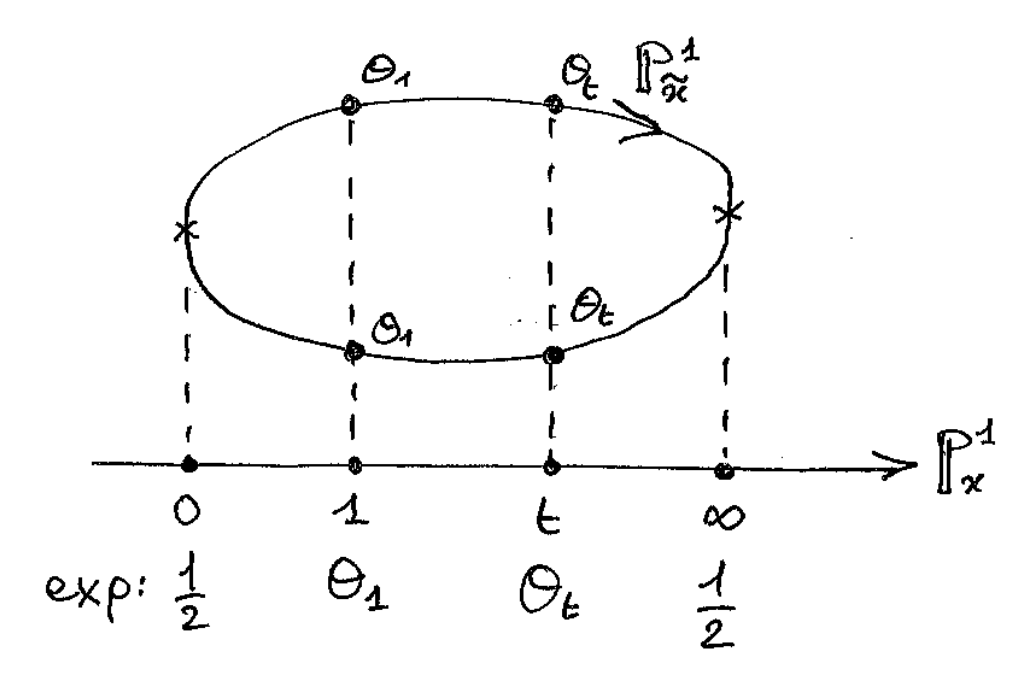}
\caption{{\bf Quadratic transformation's cover}}
\label{dessin1}
\end{center}
\end{figure}

Those two poles at ramification points $\tilde x=0,\infty$ have now integral exponents and therefore scalar local monodromy $-I$.
These singular points are ``apparent'', i.e. can be erased by a combination of
\begin{itemize}
\item a rational gauge (i.e. birational bundle) transformation,
\item the twist by a rank $1$ connection.
\end{itemize}
This can be done 
taking into account the deformation, and we get a new deformation $t\mapsto (\tilde E_t,\tilde\nabla_t)$ 
of a rank $2$ connection with $4$ simple poles $\tilde x=\pm1$ and $\pm\sqrt{t}$ on the Riemann sphere $\mathbb P_{\tilde x}^1$.
This new deformation is clearly isomonodromic if the initial deformation was.
Taking into account the exponents, we get a {\it rational two-fold cover
$$\mathrm{Quad}\ :\ \mathcal M_{0,4}(\frac{1}{2},\theta_1,\theta_t,\frac{1}{2})\ \stackrel{2:1}{\longrightarrow}\ \mathcal M_{0,4}(\theta_1,\theta_1,\theta_t,\theta_t)$$
between moduli spaces that conjugates isomonodromic foliations}. The map $\mathrm{Quad}$ is called quadratic transformation 
of the Painlev\'e VI equation.

\subsection{}\label{SecQuartic}When exponents satisfy $\theta_0=\theta_1=\theta_\infty=\frac{1}{2}$, we can iterate twice the map
(after conveniently permuting the poles) and we get the {\bf quartic transformation}
$$\mathrm{Quad}\circ\mathrm{Quad}\ :\ \mathcal M_{0,4}(\frac{1}{2},\frac{1}{2},\theta_t,\frac{1}{2})\ \stackrel{4:1}{\longrightarrow}\ \mathcal M_{0,4}(\theta_t,\theta_t,\theta_t,\theta_t).$$
Finally, if we consider the Picard parameters $\theta_0=\theta_1=\theta_t=\theta_\infty=\frac{1}{2}$ for Painlev\'e VI equation,
we can iterate arbitrary many times the quadratic tranformation. There is also a cubic transformation in this case
(see \cite{MazzoccoVidunas}). 

\subsection{}\label{SecPicard}For {\bf Picard parameters} $(\theta_0,\theta_1,\theta_t,\theta_\infty)=(\frac{1}{2},\frac{1}{2},\frac{1}{2},\frac{1}{2})$ of Painlev\'e VI equation, 
one can modify the construction above as follows. Consider now the elliptic two-fold cover ramifying over the $4$ poles of $(E_t,\nabla_t)$
$$\phi_t\ :\ X_t=\{y^2=x(x-1)(x-t)\}\stackrel{2:1}{\rightarrow}\mathbb P^1_x\ ;\ (x,y)\mapsto x$$
and lift-up the connection on the elliptic curve. After birational gauge transformation, we get a holomorphic connection $(\tilde E_t,\tilde \nabla_t)$
that generically split as the direct sum of two holomorphic connections of rank $1$. This means that, for these parameters, Painlev\'e VI
solutions actually parametrize isomonodromic deformations of rank $1$ connections over a family of elliptic curves. This allow to solve
this very special element of Painlev\'e VI family (originally found by Picard) by means of elliptic functions (see \cite{Hitchin,MazzoccoPicard,Lame0}). By the way, we get {\it a birational map
$$\mathcal M_{0,4}(\frac{1}{2},\frac{1}{2},\frac{1}{2},\frac{1}{2})\ \stackrel{\sim}{\longrightarrow}\ \mathcal M_{1,0}$$
that commutes with isomonodromic flow}. 

\begin{figure}[htbp]
\begin{center}
\includegraphics[scale=0.8]{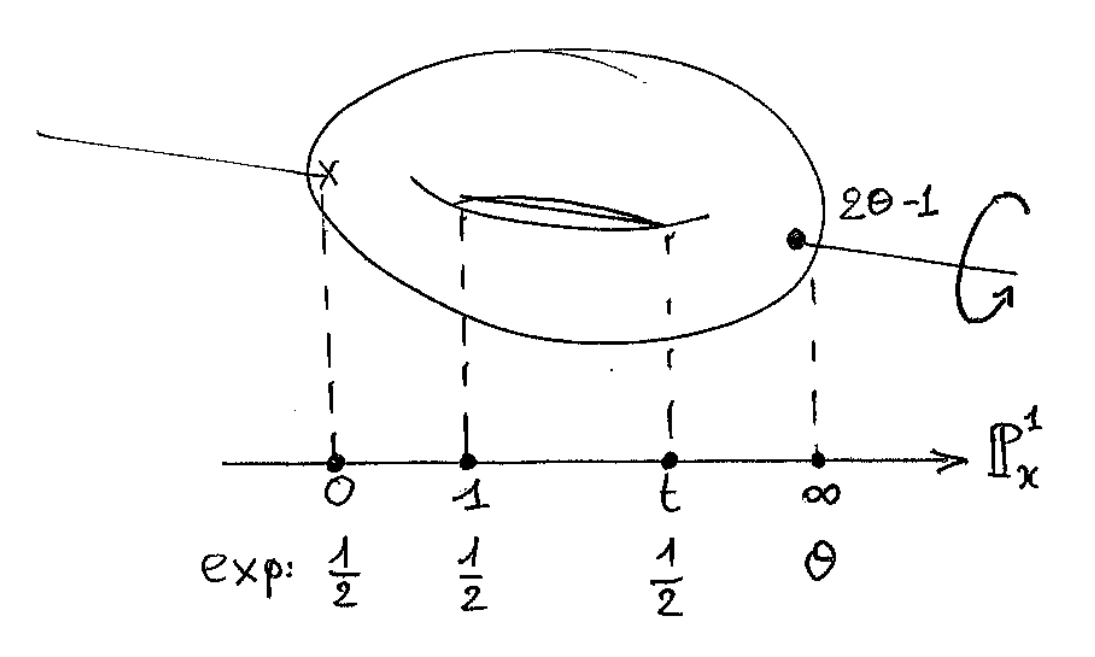}
\caption{{\bf Lam\'e's cover}}
\label{dessin2}
\end{center}
\end{figure}

\subsection{}\label{SecLame}This map has been extended to {\bf Lam\'e parameters} in \cite{Lame1,Lame2} as a birational transformation
$$\text{Lam\'e}\ :\ \mathcal M_{0,4}(\frac{1}{2},\frac{1}{2},\frac{1}{2},\theta_\infty)\ \stackrel{\sim}{\longrightarrow}\ \mathcal M_{1,1}(2\theta_\infty-1)$$
also commuting with isomonodromic flow (see figure \ref{dessin2}).

\subsection{}\label{SecGenus2}In \cite{HeuLoray}, a $2$-fold ramified cover commuting with isomonodromic flow
$$\mathcal M_{0,6}(\frac{1}{2},\frac{1}{2},\frac{1}{2},\frac{1}{2},\frac{1}{2},\frac{1}{2})\ \stackrel{2:1}{\longrightarrow}\ \mathcal M_{2,0}$$
has been constructed by lifting connections on the hyperelliptic cover 
$$\phi_{r,s,t}\ :\ X_{r,s,t}=\{y^2=x(x-1)(x-r)(x-s)(x-t)\}\ \stackrel{2:1}{\longrightarrow}\ \mathbb P^1_x\ ;\ (x,y)\mapsto x$$
(see figure \ref{dessin3}).

\begin{figure}[htbp]
\begin{center}
\includegraphics[scale=0.8]{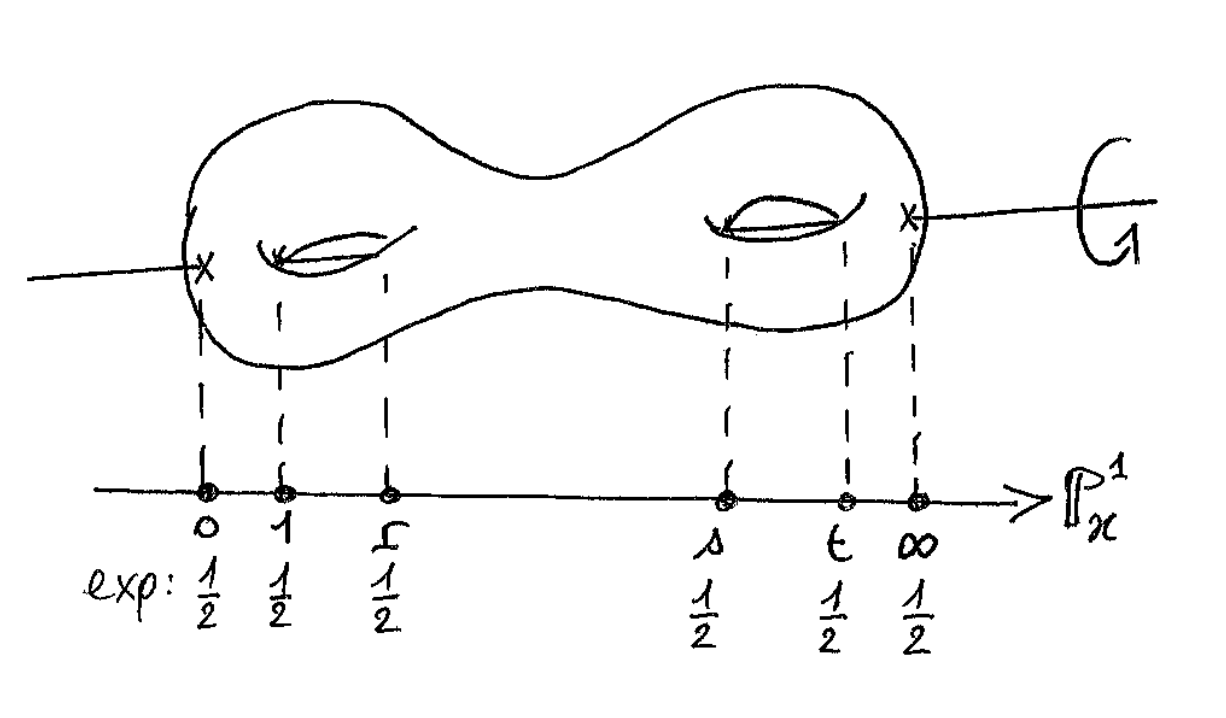}
\caption{{\bf Genus $2$ cover}}
\label{dessin3}
\end{center}
\end{figure}

\subsection{}However, for higher genus $g>2$ hyperelliptic curve, the similar map 
$$\mathcal M_{0,2g+2}(\frac{1}{2},\ldots,\frac{1}{2})\ \longrightarrow\ \mathcal M_{g,0}$$
has small image: not only the deformation upstairs is reduced to the hyperelliptic locus (having codimension $g-2$), 
but even for a fixed hyperelliptic curve, the image has codimension $2(g-1)$ in the moduli space of connections. 

\section{Results}\label{SecResults}

In this note, we classify all ``interesting'' maps that can be constructed between moduli spaces like above,
using ramified covers of curves. Let us explain.

Let $(X,D^\nabla,E,\nabla)$ be a logarithmic rank $2$ connections and $\phi:\tilde X\to X$ be a ramified cover.
Let $D^\phi$ denotes the set of critical points of $\phi$ while $D^\nabla$ denotes the set of poles of $\nabla$;
they will be not disjoint in many cases. Consider now the universal deformation $t\mapsto(X_t,D_t)$
of the marked curve $(X,D)$ where $D$ is the union of $D^\phi$ and $D^\nabla$.
There is a unique local deformation $t\mapsto(X_t,D_t,E_t,\nabla_t,\phi_t)$ where $t\mapsto(X_t,D^\nabla_t,E_t,\nabla_t)$
is isomonodromic, and $t\mapsto(X_t,D^\phi_t,\nabla_t)$ is topologically trivial (we just deform the critical locus $D^\phi_t$).
Fibers of the map $t\mapsto(X_t,D^\nabla_t)$ are algebraic deformations, so-called Hurwitz families.

The main remark is that the lift to $\tilde X_t$ of the connection:
$$t\mapsto(\tilde E_t,\tilde\nabla_t):=\phi_t^*(E_t,\nabla_t)$$
is isomonodromic along the deformation. By applying rational gauge transformation and twisting with a rank $1$ isomonodromic deformation,
we may assume that $(\tilde E_t,\tilde\nabla_t)$ is an isomonodromic deformation of logarithmic $\sl$-connexion, free of apparent singular points.
In fact, this is possible whenever $\nabla_t$ has an essential singular point, i.e. with monodromy. Let $\tilde D_t$ be the 
(reduced) polar divisor of $\tilde\nabla_t$ after deleting apparent singular points.
Last but not least, assume that 
\begin{itemize}
\item the connection $(E_t,\nabla_t)$, or equivalently $(\tilde E_t,\tilde\nabla_t)$, has Zariski dense monodromy,
\item the deformation $t\mapsto (\tilde X_t,\tilde D_t,\tilde E_t,\tilde\nabla_t)$ induces a locally universal deformation 
$t\mapsto (\tilde X_t,\tilde D_t)$
of the marked curve.
\end{itemize}
These are the so-called ``interesting'' conditions.
The second item means that we get a complete isomonodromic deformation after the construction. We thus get 
a complete parametrisation of a leaf of the isomonodromic foliation. All examples listed in section \ref{SecKnownConstructions}
are examples of such constructions. It is easy to construction many exemples where all conditions but the last one are satisfied.
However, the last condition, saying that we get the complete deformation, is so hard to realized that we are able, 
in section \ref{SecClassification}, to classify all examples. This is our main result in this note. Appart from above 
known examples, we have the following three new cases.

\subsection{}\label{SecUncompleteTwicePuncturedTorus}

Let $s\mapsto X_s=\{y^2=x(x-1)(x-s)\}$ the Legendre family of elliptic curves and let $t\mapsto(E_t,\nabla_t)$
an isomonodromic deformation of a rank $2$ connection with poles located at $x=0,1,t,\infty$. More rigorously, we should  say
$\tilde t\mapsto(E_t,\nabla_t)$ where $\tilde t$ belongs to the Teichmuller space, 
given by the universal cover $T\to\mathbb P^1_x\setminus\{0,1,\infty\}$ in this case, 
and $t$ denotes the projection of $\tilde t$ on $\mathbb P^1_x\setminus\{0,1,\infty\}$.
Now, assume that exponents of $\nabla_t$ take the form $(\theta_0,\theta_1,\theta_t,\theta_\infty)=(\frac{1}{2},\frac{1}{2},\theta,\frac{1}{2})$.
Therefore, after lifting on the elliptic curve, we get a connection with $3$ apparent singular points and two copies 
of the singular point at $x=t$. By gauge transformation, we finally get a connection $(\tilde E_{s,t},\tilde\nabla_{s,t})$
with only two simple poles, but to get a $\mathfrak{sl}_2$-connection we need to shift one of the two exponents (see figure \ref{dessin4}).
We finally get a rational map 
$$\mathbb P^1_s\times\mathcal M_{0,4}(\frac{1}{2},\frac{1}{2},\theta_t,\frac{1}{2})\ \longrightarrow\ \mathcal M_{1,2}(\theta,\theta-1).$$

\begin{figure}[htbp]
\begin{center}
\includegraphics[scale=0.8]{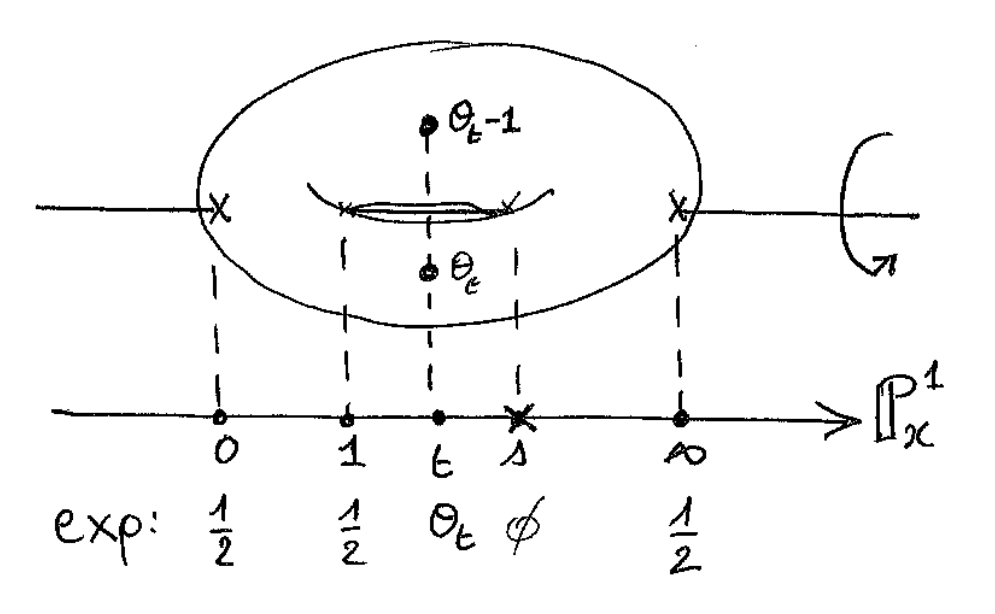}
\caption{{\bf Ruled deformations via uncomplete elliptic cover}}
\label{dessin4}
\end{center}
\end{figure}

Each isomonodromic deformation thus obtained is parametrized by a combination of a Painlev\'e VI solution (variable $t$) 
and a rational function (variable $s$). We get a $2$-parameter space of such tame isomonodromic deformations;
they form a codimension $2$ subset in $\mathcal M_{1,2}(\theta,\theta-1)$, the image of the map above, 
which is saturated by the isomonodromic foliation. The leaves belonging to this set are ruled surfaces parametrized by a Painlev\'e transcendent.
One should recover the Lam\'e case of section \ref{SecKnownConstructions} by restricting the isomonodromic foliation 
to the locus $s=t$. We postpone the careful study of this picture to another paper. 

\subsection{}\label{SecUncompleteGenus2}

Consider now the family of genus $2$ curves given by 
$$(s,t)\mapsto X_{s,t}=\{y^2=x(x-1)(x-s)(x-t_1)(x-t_2)\},\ \ \ s\in\mathbb C,\ t=(t_1,t_2)\in\mathbb C^2$$
together with the hyperelliptic cover  (see figure \ref{dessin5})
$$\phi_{s,t}:X_{s,t}\to\mathbb P^1_x\ ;\ (x,y)\mapsto x.$$
Let $t\mapsto(E_t,\nabla_t)$ be an isomonodromic deformation of a rank $2$ connection on $\mathbb P^1_x$ with poles located at five among the six critical values, namely $x=0,1,t_1,t_2,\infty$.
Assume that all exponents of $\nabla_t$ take the form $\theta_0=\theta_1=\theta_{t_1}=\theta_{t_2}=\theta_\infty=\frac{1}{2}$.

\begin{figure}[htbp]
\begin{center}
\includegraphics[scale=0.8]{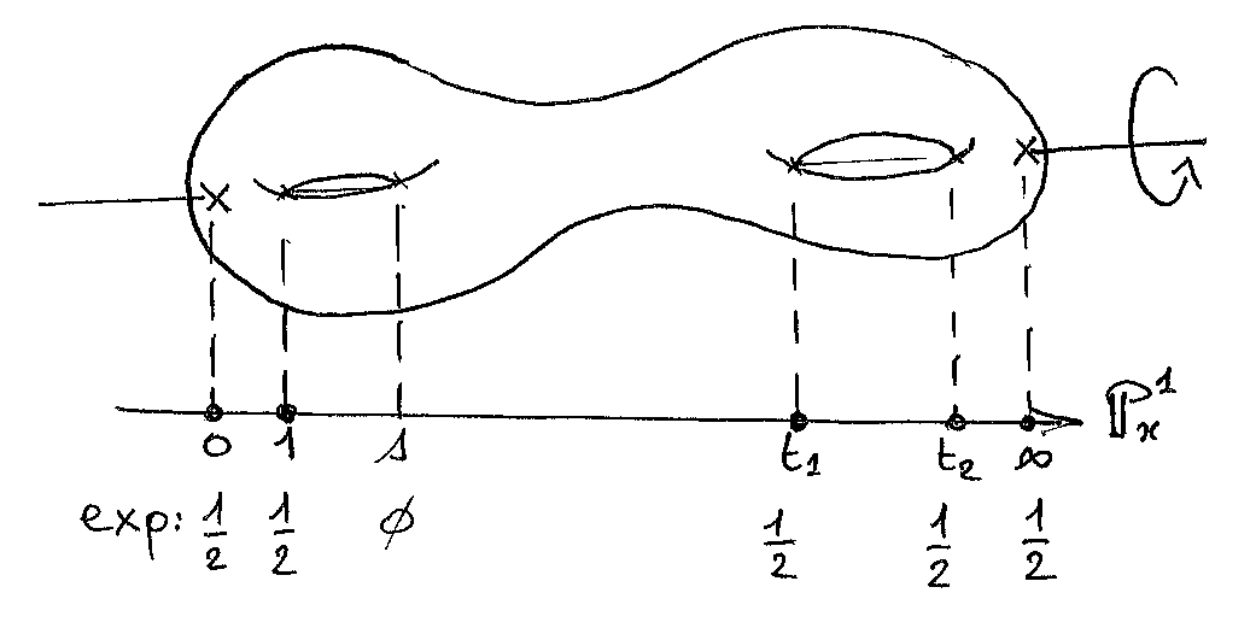}
\caption{{\bf Ruled deformations via uncomplete hyperelliptic cover}}
\label{dessin5}
\end{center}
\end{figure}

After lifting the connection to the curve $X_{s,t}$, deleting apparent singular points by gauge transformation, 
we get a $\sl$-connection on $X_{s,t}$ with a single apparent singular point located at $x=\infty$.
This provides a rational map 
$$\mathbb P^1_s\times\mathcal M_{0,5}(\frac{1}{2},\ldots,\frac{1}{2})\ \longrightarrow\ \mathcal M_{2,1}(1)$$
conjugating isomonodromic foliations. Here, the only singular point is apparent and it is not possible to delete it.
We can just choose to place it at $x=\infty$; it is irrelevant for the deformation.
Again, isomonodromic deformations obtained by this way are parametrized 
by rank $2$ Garnier solutions ($(t_1,t_2)$ variables) combined with a rational function of $s$.
Again, the corresponding leaves of the isomonodromic foliation are uniruled and form a codimension $2$ set.

\subsection{}\label{SecBielliptic1}

Finally, consider the Legendre family $t_1\mapsto X_{t_1}=\{y^2=x(x-1)(x-t_1)\}$ of elliptic curves and let $t=(t_1,t_2)\mapsto(E_t,\nabla_t)$
an isomonodromic deformation of a rank $2$ connection with poles located at $x=0,1,t_1,t_2,\infty$. 
Assume that exponents of $\nabla_t$ take the form $(\theta_0,\theta_1,\theta_{t_1},\theta_{t_2},\theta_\infty)=(\frac{1}{2},\frac{1}{2},\frac{1}{2},\theta,\frac{1}{2})$. After lifting and applying gauge transformation, we get a $\sl$-connection on the elliptic curve $X_{t_1}$ 
with two simple poles over $x=t_2$ having same exponent $\theta$. This gives us a rational map
$$\Phi_\theta\ :\ \mathcal M_{0,5}(\frac{1}{2},\frac{1}{2},\frac{1}{2},\theta,\frac{1}{2})\ \longrightarrow\ \mathcal M_{1,2}(\theta,\theta)$$
conjugating isomonodromic foliations  (see figure \ref{dessin6}). We study this map from the topological (i.e. monodromy) point of view 
in section \ref{SecTopTwoPuncTorus} and deduce 

\begin{thm}\label{ThmBirational}
The map $\Phi_\theta$ is dominant and generically two-to-one.
\end{thm}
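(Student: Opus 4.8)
The plan is to compare the two sides of $\Phi_\theta$ purely through their monodromy representations, exploiting the fact that $\Phi_\theta$ conjugates isomonodromic foliations and is therefore, on the level of leaves, completely encoded by the correspondence between representations of $\pi_1(\mathbb P^1\setminus\{0,1,t_1,t_2,\infty\})$ and representations of $\pi_1(X_{t_1}\setminus\phi^{-1}(t_2))$. First I would fix a generic base configuration and describe the ramified double cover $\phi:X_{t_1}\to\mathbb P^1_x$ concretely at the level of fundamental groups: $\pi_1$ of the four-punctured $\mathbb P^1$ (punctures $0,1,t_1,\infty$, forgetting $t_2$ for a moment) contains an index-two subgroup, the image of $\pi_1$ of the once-punctured torus, and the element of $\pi_1(X_{t_1}\setminus\phi^{-1}(t_2))$ are obtained by pulling back loops around $t_2$, which lifts to two loops $\tilde\gamma_{t_2}^{\pm}$ since $t_2$ is not a branch point. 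Concretely, if $\alpha,\beta$ are standard generators of $\pi_1$ of the torus and $\gamma$ is a small loop around one preimage of $t_2$, then the other preimage carries the conjugate loop $\delta\gamma\delta^{-1}$ where $\delta$ is the deck transformation class, and the lifted representation is the restriction of $\rho_\nabla$ composed with this explicit embedding.

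Next I would set up the two character varieties and count dimensions. On the source, $\mathcal M_{0,5}(\tfrac12,\tfrac12,\tfrac12,\theta,\tfrac12)$ has dimension $2N=2(3\cdot 0-3+5)=4$; on the target, $\mathcal M_{1,2}(\theta,\theta)$ has dimension $2N=2(3\cdot 1-3+2)=4$. So dominance is a priori possible and the heart of the matter is to show that the pullback map on representations is generically finite of degree two, after which the statement that $\Phi_\theta$ is dominant follows from the fact that a leaf upstairs has the same dimension $N=4$ as a leaf downstairs and is mapped into one. The degree-two phenomenon I expect to come from the following: a representation $\tilde\rho$ of $\pi_1(X_{t_1}\setminus\{p^+,p^-\})$ that arises as a pullback must be invariant (up to conjugacy) under the deck involution $\delta$, and conversely a $\delta$-invariant representation descends to a projective representation of $\pi_1(\mathbb P^1\setminus\{0,1,t_1,t_2,\infty\})$; the ambiguity in lifting that projective representation back to a genuine $\mathrm{GL}_2$ (or $\mathrm{SL}_2$) representation with the prescribed half-integer exponents at $0,1,t_1$ and exponent $\theta$ at $t_2$ is exactly a sign choice — a twist by the order-two character of $\pi_1(\mathbb P^1\setminus D)$ that is trivial on loops around $t_2,\infty$ — accounting for the two-to-one behaviour. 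I would make this precise by analyzing the extension class: the obstruction to $\delta$-equivariant lifting lives in an $H^1$ with $\mathbb Z/2$ coefficients, and I would show it is generically the nontrivial group, with the two lifts related by that quadratic twist, which is compatible with the exponents $(\tfrac12,\tfrac12,\tfrac12,\theta,\tfrac12)$ precisely because three of them are $\tfrac12$.

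The steps, in order, are: (1) describe $\phi_*:\pi_1(X_{t_1}\setminus\phi^{-1}(t_2))\to\pi_1(\mathbb P^1\setminus D)$ explicitly on generators and identify its image as the index-two ``orientation'' subgroup together with the two $t_2$-loops; (2) show the pullback of an irreducible Zariski-dense $\rho_\nabla$ with the given exponents is again irreducible with exponents $(\theta,\theta)$ at $p^\pm$ and trivial (apparent) local exponent at the point over $\infty$, hence lands in $\mathcal M_{1,2}(\theta,\theta)$ after the gauge normalization described in section 1 — this is essentially the ``main remark'' already granted in the excerpt, so here I would just verify the exponent bookkeeping; (3) show a generic $\tilde\rho\in\mathcal M_{1,2}(\theta,\theta)$ is $\delta$-conjugate to itself if and only if it is a pullback, using irreducibility to make the conjugating element unique up to scalar and hence well-defined as an involution; (4) count the lifts: the $\delta$-invariant $\tilde\rho$ determines $\rho_\nabla$ up to the quadratic twist, and check both twists give admissible points of the source moduli space with the correct exponents, giving the generic fibre cardinality exactly two; (5) conclude dominance by the dimension equality plus the fact that $\Phi_\theta$ maps leaves to leaves of the same dimension, so the image cannot be contained in a proper subvariety. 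The main obstacle I anticipate is step (3)–(4): correctly bounding the fibre from both sides, i.e. ruling out extra $\delta$-conjugacies (which would make the map have degree one on a sublocus) and ruling out further lifting ambiguity or non-admissible lifts (which would push the degree above two). This will require a careful local analysis at the ramification point of $\phi$ lying over $\infty$ — where the lifted connection acquires an apparent singularity that must be erased — to confirm that the gauge/twist normalization is rigid enough to leave only the single sign ambiguity.
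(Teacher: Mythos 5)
Your overall strategy --- pass to monodromy representations, describe $\phi_*$ on fundamental groups, detect pullbacks through invariance under the elliptic involution, and locate the degree two in a sign/quadratic-twist ambiguity of the lift --- is essentially the paper's, and your identification of the two-to-one mechanism is correct: the two preimages of a quadruple $(A,B,D_1,D_2)$ differ by the twist $(M_0,M_1,M_t,M_\lambda,M_\infty)\mapsto(-M_0,-M_1,M_t,-M_\lambda,-M_\infty)$, i.e.\ by the sign of the conjugating matrix. But there is a genuine gap in your dominance argument. You characterize the image as the locus of representations invariant under the involution $\alpha\mapsto\alpha^{-1}$, $\beta\mapsto\beta^{-1}$, $\delta_1\leftrightarrow\delta_2$ of Lemma~\ref{L2}, and then claim the image ``cannot be contained in a proper subvariety'' by a dimension count on leaves. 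That inference is not valid: if invariance were a nontrivial closed condition (as your phrasing suggests you expect), the image would be exactly that proper, foliation-saturated subvariety and $\Phi_\theta$ would fail to be dominant --- compare the bielliptic map $\Psi$ of Section~\ref{sectionMonodromieBielliptique}, whose image really is a codimension-two invariant locus. Even granting generic finiteness, the equidimensionality argument would further require irreducibility of the target character variety $\tilde{\mathcal R}_\theta$, which you never invoke. (Your dimension bookkeeping is also off: here $N=2$, not $4$.)

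The missing idea, which is the heart of the paper's proof (Theorem~\ref{R0}), is that for the twice-punctured torus the invariance is \emph{automatic}: for any irreducible pair $(A,B)$ there is an $M\in\SL_2(\C)$, unique up to sign, with $M^2=-I$, $MAM^{-1}=A^{-1}$, $MBM^{-1}=B^{-1}$ (the Fricke trace argument: $(\mathrm{trace}(A),\mathrm{trace}(B),\mathrm{trace}(AB))$ is preserved by inversion), and the relation $AB=D_1BAD_2$ then \emph{forces} $MD_1M^{-1}=D_2$, via the identity $\mathrm{trace}((BAM)^2D_1)+\mathrm{trace}(D_1)=\mathrm{trace}(BAMD_1)\cdot\mathrm{trace}(BAM)$ combined with $\mathrm{trace}(BAM)\neq0$. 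This produces an explicit inverse $(M_0,\ldots,M_\infty)$ over the dense open set where $\langle A,B\rangle$ is irreducible and $D_1,D_2\neq\pm I$, giving surjectivity onto that set (hence dominance) and the fibre $\{\pm M\}$ in one stroke. Your $H^1(\cdot,\mathbb Z/2)$ discussion only addresses the lifting ambiguity, not this existence statement, so as written the proposal does not prove dominance. The remaining ingredients of your plan (the explicit description of $\phi_*$ as in Lemma~\ref{L1}, the exponent bookkeeping at the four apparent singularities over the branch points, and the non-conjugacy of the two lifts, which follows from irreducibility) are sound.
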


In other word, almost all rank $2$ logarithmic connections with two poles on an elliptic curve 
is a pull-back of a rank $2$ logarithmic connection on $\mathbb P^1$; in particular, such connections are 
invariant (up to gauge equivalence) under the hyperelliptic involution permuting the two poles.
This construction can be thought as intermediate between the genus two case and the Lam\'e case of section \ref{SecKnownConstructions}.
This is a reminiscence of the hyperelliptic nature of the twice-punctured torus. 

\begin{figure}[htbp]
\begin{center}
\includegraphics[scale=0.8]{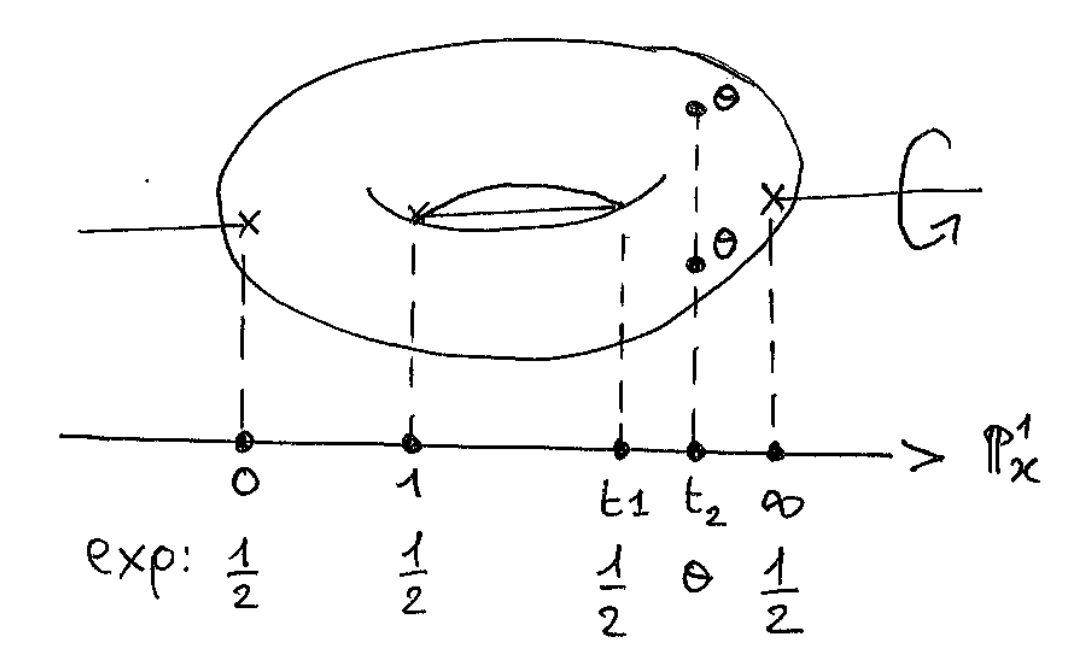}
\caption{{\bf The two punctured torus}}
\label{dessin6}
\end{center}
\end{figure}

\subsection{Classification}

We prove in section \ref{SecClassification} the following

\begin{thm}
Let $t\mapsto(X_t,D_t,E_t,\nabla_t)$ be an isomonodromic deformation of logarithmic $\sl$-connections.
Let $\phi_t:\tilde X_t\to X_t$ a family of ramified covers.
Assume that the pull-back deformation $t\mapsto(\tilde X_t,\tilde D_t,\tilde E_t,\tilde\nabla_t)$
after deleting apparent singular points is locally universal, i.e. the corresponding map $t\mapsto (\tilde X_t,\tilde D_t)$
is locally surjective. In particular, the deformation has dimension $\ge3\cdot\mathrm{genus}(\tilde X_t)-3+\deg(\tilde D_t)$.
Then we are in one of the following cases:
\begin{itemize}
\item The monodromy of $\nabla_t$ (or equivalently $\tilde\nabla_t$) is finite, reducible or dihedral.
\item The deformation $t\mapsto(X_t,D_t,E_t,\nabla_t)$ is actually trivial, and we get an algebraic isomonodromic deformation
by deforming $\phi_t$.
Up to gauge transformation, we are in the list of Doran \cite{Doran} or Diarra \cite{DiarraBoletim}.
In particular, $(X_t,D_t,E_t,\nabla_t)$ is a rigid hypergeometric system ($X_t=\mathbb P^1$, $\deg(D_t)=3$) and $\deg(\phi_t)\le18$.
\item The deformation $t\mapsto(X_t,D_t,E_t,\nabla_t)$ is non trivial, $X_t=\mathbb P^1$, $\deg(\phi_t)=2$ or $4$, 
and we are in one of the constructions described in sections 
\ref{SecQuadratic}, \ref{SecQuartic}, \ref{SecLame}, \ref{SecGenus2}, 
\ref{SecUncompleteTwicePuncturedTorus}, \ref{SecUncompleteGenus2} and \ref{SecBielliptic1}.
\end{itemize}
\end{thm}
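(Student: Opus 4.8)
The plan is to confront the local-universality hypothesis with the Riemann--Hurwitz formula, reduce to a short list of numerical possibilities, and then treat each by hand. One may first assume the monodromy of $\nabla_t$ (equivalently of $\tilde\nabla_t$) is Zariski dense: a rank-$2$ monodromy which is not Zariski dense is, up to conjugacy, finite, reducible, or dihedral, and that is the first alternative. So from now on the monodromy is Zariski dense, hence nontrivial, and the $\sl$-normalisation of $\tilde\nabla_t=\phi_t^*\nabla_t$ recalled just before the statement can be carried out in the family. Set $g:=\mathrm{genus}(X_t)$, let $n:=\deg D^\nabla_t$ be the number of poles of $\nabla_t$ (with $N:=3g-3+n\ge 1$ the dimension of the isomonodromic leaf downstairs), let $k$ be the number of branch points of $\phi_t$ that are \emph{not} poles, and put $d:=\deg\phi_t$. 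Since the deformation $t\mapsto(X_t,D_t,E_t,\nabla_t,\phi_t)$ is the universal one over the marked curve $(X_t,D^\phi_t\cup D^\nabla_t)$, it has dimension $\mathcal N:=3g-3+n+k$, and local universality reads $\mathcal N\ge 3\cdot\mathrm{genus}(\tilde X_t)-3+\deg\tilde D_t$.

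Next comes the Riemann--Hurwitz bookkeeping. Write the total ramification of $\phi_t$ as $R=R_1+R_2$, with $R_1$ counting ramification over poles and $R_2$ over the $k$ other branch points, so that $R_2\ge k$ and $\mathrm{genus}(\tilde X_t)=d(g-1)+1+R/2$. A preimage of a pole $x_i$ with ramification index $e$ carries exponent $e\theta_i$ and is removable (apparent, after twisting away a possible $-I$ local monodromy) exactly when $e\theta_i\in\mathbb Z$; as $\theta_i\notin\mathbb Z$ this forces $e\ge 2$, so every removable point lies over a pole and is a ramification point, whence the number $A$ of removable points satisfies $A\le R_1$ and $\deg\tilde D_t=nd-R_1-A$. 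Substituting the two expressions above into $\mathcal N\ge 3\cdot\mathrm{genus}(\tilde X_t)-3+\deg\tilde D_t$ and using $R_2\ge k$ gives, after a short computation, $A\ge(d-1)N+R/2$, and then, since $A\le R_1\le R$, the estimate $R\ge 2(d-1)N$. So $\phi_t$ must be heavily ramified. Reinjecting this into Riemann--Hurwitz, and using that a removable preimage over $x_i$ forces the ramification index there to be a multiple of the order of $\theta_i$ (so $\theta_i=\tfrac12$ whenever $d=2$, etc.), I expect to be left with exactly three possibilities: the base deformation $t\mapsto(X_t,D^\nabla_t,E_t,\nabla_t)$ is trivial, hence the base curve is rigid, $X_t=\P^1$, $n=3$ and $(E_t,\nabla_t)$ hypergeometric; or $g=0$, $d=2$, with $\theta_i=\tfrac12$ at all poles but at most one; or $g=0$, $d=4$, arising by iterating a $d=2$ situation twice.

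It then remains to dispose of the two surviving regimes. In the hypergeometric one, $(X_t,D^\nabla_t,E_t,\nabla_t)$ is a fixed rigid connection on $\P^1\setminus\{0,1,\infty\}$ and the only moving datum is $\phi_t$ itself; its non-pole branch points trace out a Hurwitz space, the pulled-back family is algebraic, and local universality says precisely that this Hurwitz space dominates the moduli space of the marked curve $(\tilde X_t,\tilde D_t)$. This is exactly the situation classified by Doran \cite{Doran} (target $\P^1$ with four marked points) and by Diarra \cite{DiarraBoletim} (five or more marked points); their classification of such pull-back transformations, which ultimately rests on Klein's list of finite subgroups of $\mathrm{PGL}_2$, yields the bound $\deg\phi_t\le 18$. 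In the remaining regime I would enumerate the degree-$2$ covers $\phi_t$ of $\P^1$ by their number $2\gamma+2$ of branch points, so that $\tilde X_t$ has genus $\gamma\in\{0,1,2\}$, together with their iterates, and in each case match the branch configuration, the count $A$ of removable preimages, and the dimension identity $\mathcal N=3\gamma-3+\deg\tilde D_t$ (an equality in every example) against the constructions of sections \ref{SecQuadratic}, \ref{SecQuartic}, \ref{SecLame}, \ref{SecGenus2}, \ref{SecUncompleteTwicePuncturedTorus}, \ref{SecUncompleteGenus2} and \ref{SecBielliptic1}; the bielliptic construction \ref{SecBielliptic1} appears here as the single case genuinely intermediate between Lam\'e and the genus-$2$ cover, pinned down by Theorem \ref{ThmBirational} and the monodromy computation of section \ref{SecTopTwoPuncTorus}.

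The hard part is this last matching step. Since the dimension inequality is saturated in every example, the crude count does not by itself rule out the spurious branch configurations: for each surviving combinatorial type one must verify that the induced map $t\mapsto(\tilde X_t,\tilde D_t)$ is genuinely submersive onto the moduli of marked curves, and not merely dimension-balanced, and that the $\sl$-normalisation --- erasing the removable preimages, killing the $-I$ local monodromies by a rank-$1$ twist, and accepting the occasional parity-obstructed residual apparent point as in section \ref{SecUncompleteGenus2} --- can be performed uniformly in $t$. Handling this for the composite degree-$4$ covers, and for the degree-$2$ covers onto the twice-punctured torus, is where the representation-theoretic input does the real work, Zariski density being what excludes the remaining sporadic covers.
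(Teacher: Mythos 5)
Your numerical framework is sound and is in substance the same as the paper's: the paper runs the count through the hyperbolic orbifold area (i.e.\ orbifold Riemann--Hurwitz, inequality (\ref{equa:INEGALITEaires}) combined with (\ref{equa:INEGALITEbranchements}) and (\ref{equa:INEGALITEdimensions})), whereas you use plain Riemann--Hurwitz plus a count of apparent preimages; your inequality $A\ge(d-1)N+R/2$, hence $R\ge 2(d-1)N$, is correct. But this inequality by itself bounds neither $d$ nor $g$: the decisive extra input is the one you only mention in passing, namely that a removable preimage of a pole $x_i$ must have ramification index a multiple of the order $\nu_i$ of $\theta_i$, so it consumes $\nu_i$ sheets and contributes $\nu_i-1$ to $R_1$. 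Keeping track of this per-pole (this is exactly the term $\sum_j(1-k_j/\nu_{i(j)})$ in (\ref{equa:INEGALITEaires})) is what lets the paper first force $g=0$, then split on the maximal order $\nu$ ($\nu=\infty$, $\nu=2$, $3\le\nu\le d$) to get $d\le4$, and finally enumerate the admissible branch configurations for $d=2,3,4$. You assert the resulting trichotomy (``I expect to be left with\dots'') and defer the enumeration (``I would enumerate\dots'', ``one must verify\dots''), but that enumeration \emph{is} the proof: without it you have not excluded $d=3$ (the paper kills it because the pulled-back orbifold would have at most four order-$2$ points on $\mathbb P^1$, contradicting hyperbolicity), nor matched the surviving $d=2$ and $d=4$ configurations to the seven constructions, nor dealt with the hyperbolicity/Euclidean borderline cases (e.g.\ the Picard signature of section~\ref{SecPicard}, which must land in the first alternative).

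There is also a concrete misattribution at the end. The one configuration that survives all the numerics and must be excluded by a genuinely non-numerical argument is the degree-$4$ cover of the $(2,2,2,3)$-orbifold on $\mathbb P^1$ by a once-punctured torus; the paper rules it out because the monodromy of the \emph{covering} in $\Sigma_4$ would have to be generated by three double transpositions whose product has order $3$, which is impossible since the double transpositions together with the identity form a subgroup of $\Sigma_4$. This is pure covering-space combinatorics and has nothing to do with Zariski density of the monodromy of $\nabla_t$ or with ``representation-theoretic input'', contrary to your closing paragraph; Zariski density is used only to place the degenerate monodromies into the first alternative and to guarantee hyperbolicity of the underlying orbifold. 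So the strategy is the right one, but the classification itself --- the content of section~\ref{SecClassification} --- is missing from your argument.
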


\subsection{Complement}\label{SecBielliptic2}

In the last section, we complete the picture of section \ref{SecBielliptic1} when $\theta=\frac{1}{2}$
by constructing a rational map
$$\Psi\ :\ \mathcal M_{1,2}(\frac{1}{2},\frac{1}{2})\ \longrightarrow\ \mathcal M_{2,0}$$
that conjugates isomonodromic foliations.  
In order to explain, consider the ``bi-elliptic cover'' 
$$\xymatrix{
    \tilde X_{t_1,t_2} \ar[r]^{\pi_2} \ar[d]_{\pi_1} \ar[rd]^{\phi} & X_{t_2} \ar[d]^{\phi_2} \\
    X_{t_1} \ar[r]_{\phi_1} & \mathbb P^1_x
}$$
where $\phi_i:X_i\to\mathbb P^1_x$ is the elliptic two-fold cover branching over $x=0,1,t_i,\infty$, for $i=1,2$,
and the remaining part of the diagramm is the fiber product of $\phi_1$ and $\phi_2$. In particular,
$\tilde X_{t_1,t_2}$ has genus $2$ and each $\pi_i: \tilde X_{t_1,t_2}\to X_i$ is a two-fold cover
branching over the two points $\phi_i^{-1}(t_j)$ (where $\{i,j\}=\{1,2\}$). By the way, $\phi:\tilde X_{t_1,t_2}\to\mathbb P^1_x$
is a $4$-fold cover ramifying over all five points $x=0,1,t_1,t_2,\infty$.

\begin{figure}[htbp]
\begin{center}
\includegraphics[scale=0.8]{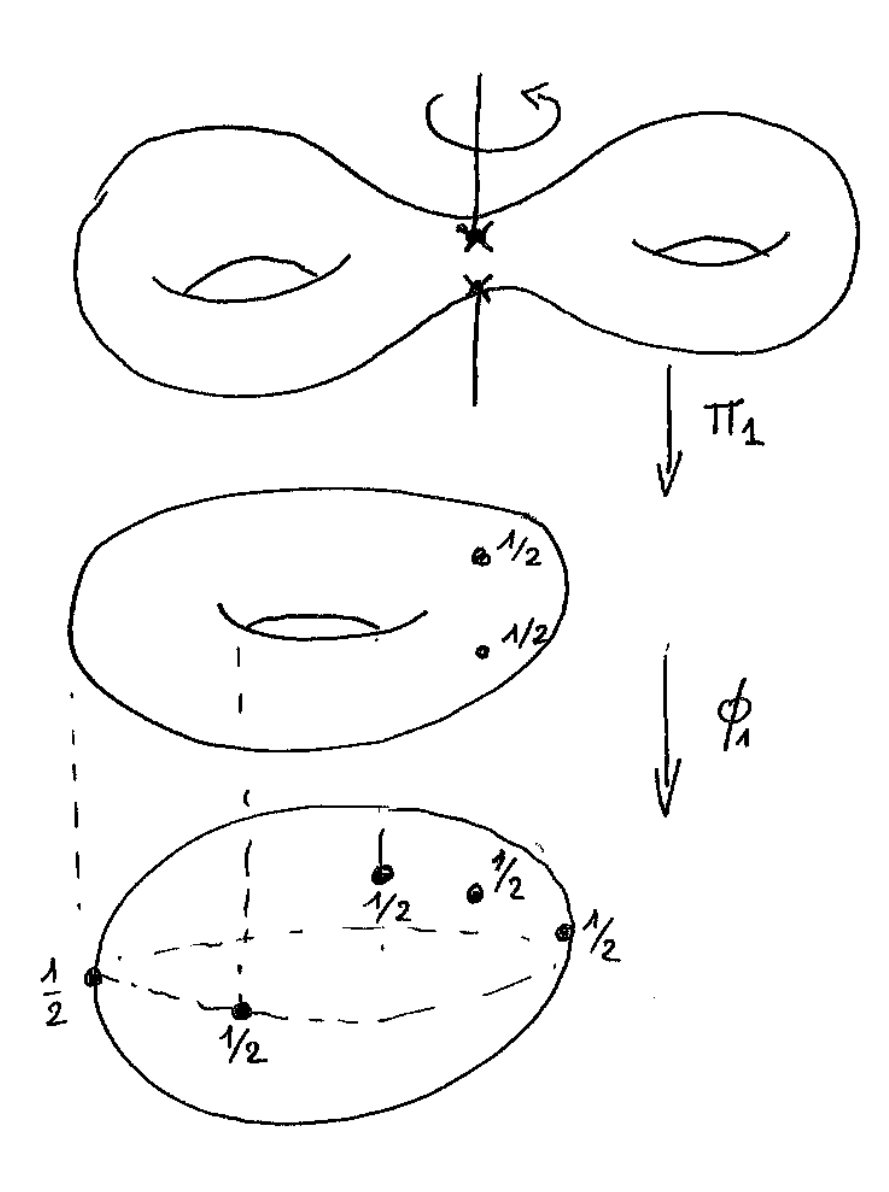}
\caption{{\bf Bi-elliptic cover}}
\label{dessin7}
\end{center}
\end{figure}

The map $\Phi_{\theta}$ of section \ref{SecBielliptic1} comes from the elliptic covering $\pi_1$, while
the map $\Psi$ above, from $\phi_1$ in the bi-elliptic diagramm. In Theorem \ref{ThBielliptic},
we characterize the image of $\Psi$ and 
$$\Psi\circ\Phi_{\frac{1}{2}}\ :\ \mathcal M_{0,5}(\frac{1}{2},\ldots,\frac{1}{2})\ \longrightarrow\ \mathcal M_{2,0}$$
in terms of the monodromy representation. Mind that, contrary to the previous constructions, we do not get complete
isomonodromic deformations (of holomorphic $\sl$-connections on genus $2$ curves) but isomonodromic 
deformations over the codimension $1$ bi-elliptic locus in the moduli space $M_{2,0}$.

This last construction was inspired by \cite{Machu}, where isomonodromic deformations of dihedral logarithmic $\sl$-connections 
are constructed in $\mathcal M_{1,2}(\frac{1}{2},\frac{1}{2})$ as direct image of rank $1$ holomorphic connections on the bi-elliptic 
cover $X_{t_1,t_2}$.

\section{Classification of covers}\label{SecClassification}

Here, we follow ideas of \cite{DiarraBoletim,DiarraToulouse}, replacing connections by their underlying
orbifold structure {\it \`a la Poincar\'e}. 

Let $\phi:\tilde X\to X$ be a ramified cover where $X$ is a genus $g$ hyperbolic orbifold
with $n$ singularities of order $2\le \nu_1\le\cdots\le\nu_n\le\infty$ (i.e. having angle $\alpha_i=\frac{2\pi}{\nu_i}$). 
Pulling-back by $\phi$, we get a {\bf branched} orbifold structure on $\tilde X$: 
orbifold points have angle $\tilde\alpha=\frac{2\pi k}{\nu}$ where $k$ is the branching order of $\phi$ 
(i.e. $\phi\sim z^k$) and 
\begin{itemize}
\item $\nu=\nu_i$ over $i^{\text th}$ orbifold point of $X$, 
\item $\nu=1$ over a regular point. 
\end{itemize}
Denote by $\tilde g$ the genus of $\tilde X$, and by $b$ the number of branching points on $\tilde X$.

The volume of $X$ with respect to the orbifold metric is given by
$$\mathrm{aire}(X)=2\pi(2g-2)+\sum_{i=1}{n}(2\pi-\alpha_i);$$
we get the analogous formula for $\tilde X$ with respect to the pull-back metric
(even if $\alpha_i$ need not be $<2\pi$)
and $\mathrm{aire}(\tilde X)=d\cdot \mathrm{aire}(X)$ where $d=\deg(\phi)$.
This yields (after division by $2\pi$)
\begin{equation}\label{equa:INEGALITEaires}
d\cdot\left(2g-2+\sum_{i=1}^n\left(1-\frac{1}{\nu_i}\right)\right)\ \le\ 2\tilde g-2+\sum_{j=1}^{\tilde n}\left(1-\frac{k_j}{\nu_{i(j)}}\right)-b
\end{equation}
If branching points are simple (with branching order $2$) then we get an equality.

We want to classify cases for which, by deforming simultaneously $X$ and $\phi$,
we get the local universal deformation of $\tilde X$. The dimension of the deformation space of $X$ is given by $3g-3+n\ge0$ 
(positivity ollows from hyperbolicity). For $\tilde X$, since we are more involved in the differential equation than in the orbifold structure, 
we do not take into account the branching points in the deformation, and dimension is given by $3\tilde g-3+\tilde n$.
The dimension of deformation of the ramified cover $\phi$ is given by the number of ``free'' critical values (outside orbifold points)
and thus bounded by $b$. We thus want
 \begin{equation}\label{equa:INEGALITEbranchements}
3g-3+n+b\ge 3\tilde g-3+\tilde n
\end{equation}
On the other hand, it is reasonable to ask
\begin{equation}\label{equa:INEGALITEdimensions}
0<3g-3+n\le 3\tilde g-3+\tilde n
\end{equation}
first because inequality $3g-3+n=0$ corresponds (in the hyperbolic case) to hypergeometric $(g,n)=(0,3)$ 
that has been treated in \cite{DiarraBoletim,DiarraToulouse}; 
right inequality just tells us that we are looking for reductions of isomonodromic equations. 
Throughout the paper, we will also ask $d\ge2$ not to deal with trivial covers. 

Let us first roughly reduce (\ref{equa:INEGALITEaires}) combined
with (\ref{equa:INEGALITEbranchements}). In view of this, let us denote $\nu=\nu_n$ the maximum orbifold 
order (that might be infinite). Then
$$\sum_{i=1}^n\left(1-\frac{1}{\nu_i}\right)\ge \frac{n-1}{2}+\left(1-\frac{1}{\nu}\right).$$
By the same way, we have
$$\sum_{j=1}^{\tilde n}\left(1-\frac{k_j}{\nu_{i(j)}}\right)\le \tilde n \left(1-\frac{1}{\nu}\right).$$
We thus get
\begin{equation}\label{equa:INEGALITEgrossiere}
(2d-3)g+\frac{d-2}{2}n+\tilde g+\frac{\tilde n}{\nu}\ \le\ d\left(\frac{3}{2}+\frac{1}{\nu}\right)-2.
\end{equation}
In fact, we have implicitely assumed $n\not=0$. In the case $n=0$, we automatically get
$\tilde n=0$ and inequality becomes
$$(2d-3)g+\tilde g \le 2d-2;$$
however, we must have $2\le g\le \tilde g$ (hyperbolicity and growth of genus by ramified covers)
that gives us $(2d-2)g\le 2d-2$, contradiction.

\subsection{First bounds}
From the classical Riemann-Hurwitz formula, we necessarily get $\tilde g\ge g$.
After (\ref{equa:INEGALITEgrossiere}), we thus get
$$(2d-2)g\le d\left(\frac{3}{2}+\frac{1}{\nu}\right)-2\le 2d-2.$$
Therefore, we promptly deduce $g\le1$. But when $g=1$, the rough inequality
(\ref{equa:INEGALITEgrossiere}) must be an equality, yielding
$g=\tilde g=1$ and thus (still following Riemann-Hurwitz formula)
$n=\tilde n=0$ and $b=0$. This case is however non hyperbolic.
{\it We can therefore assume $g=0$ from now on.} In particular, $n\ge4$ from (\ref{equa:INEGALITEdimensions}),
and in case $n=4$, hyperbolicity implies $\nu\ge3$.

We can also assume that either $\nu\le d$, or $\nu=\infty$. Indeed, as soon as $\nu>d$,
all points of the fiber $\phi^{-1}(p_n)$ are orbifold; we can therefore
modify the orbifold structure of $X$, replacing $\nu$ by $\infty$, without
modifying the numbers $n$ and $\tilde n$ of orbifold points, and thus without changing dimensions involved in our problem. 

{\bf Assume} $\nu=\infty$. Then (\ref{equa:INEGALITEgrossiere}) gives
$$\frac{d-2}{2}n+\tilde g\ \le\ \frac{3d}{2}-2$$
and thus
$$d\le 2\frac{n-2-\tilde g}{n-3}\le2\frac{n-2}{n-3}.$$
Since $d\ge 2$, we promptly deduce $\tilde g\le 1$, and more precisely,
we are in one of the following cases
\begin{itemize}
\item  $d=2$, $\tilde g\le1$ and $n$ arbitrary,
\item $d=3$, $\tilde g=0$ and $n=4$ or $5$,
\item $d=4$, $\tilde g=0$ and $n=4$.
\end{itemize}
In particular, we get $d\le4$ in this case.

{\bf Assume} $\nu=2$; in this case, $n\ge 5$ because of hyperbolicity.
Then (\ref{equa:INEGALITEgrossiere}) gives
$$d\left(\frac{n}{2}-2\right)\le n-2-\tilde g-\frac{\tilde n}{2}\le n-2-\tilde g-\frac{\tilde n}{3}\le \frac{2n}{3}-2$$
where right inequality follows from (\ref{equa:INEGALITEdimensions}) $3\tilde g+\tilde n\ge n$.
This gives us
$$d\le \frac{4}{3} \frac{n-3}{n-4} <3$$
(because $n\ge 5$) and therefore $d=2$. Taking into account (\ref{equa:INEGALITEgrossiere}),
we get
$$\tilde g+\frac{\tilde n}{2}\le 2.$$
This gives us the following possibilities
\begin{itemize}
\item  $\tilde g=2$ and $\tilde n=0$,
\item  $\tilde g=1$ and $\tilde n\le 2$,
\item  $\tilde g=0$ and $\tilde n\le 4$.
\end{itemize}

{\bf Assume finally} $3\le \nu \le d$.
Then (\ref{equa:INEGALITEgrossiere}) yields
$$d\left(\frac{n}{2}-2+\frac{1}{2}-\frac{1}{\nu}\right)\le n-2-\tilde g-\frac{\tilde n}{\nu}\le n-2-\frac{n}{\nu}-\frac{\nu-3}{\nu}\tilde g$$
where right inequality again follows from (\ref{equa:INEGALITEdimensions}) $3\tilde g+\tilde n\ge n$.
We deduce
$$d\le 2\frac{(n-2)\nu-n}{(n-3)\nu-2}.$$
For each $n>4$, right-hand-side is an increasing function of $\nu$ with asymptotic $2\frac{n-2}{n-3}\le\frac{3}{2}$
when $\nu\to\infty$. Since $\nu<\infty$ here, we get $d<3$ and thus $d=2$; by the way, $\nu\le d\le 2$ and this case is empty.
For  $n=4$, right-hand-side is $4$ whatever is the value of $\nu$. Taking into account (\ref{equa:INEGALITEgrossiere})
for $n=4$ and $d=3,4$, we get
\begin{itemize}
\item  $\tilde g=1$, $\tilde n=1$ (and $\nu=3$),
\item  $\tilde g=0$ and $\tilde n= 4$.
\end{itemize}

\subsection{Degree $d=2$} Here, $\phi$ branches over $2\tilde g+2$ points; recall that $\tilde g\le 2$.
At any orbifold point $p_i$, except when $\nu_i=2$ and $\phi$ branches over $p_i$, 
we can assume $\nu_i=\infty$. In other words, we have say 
\begin{itemize}
\item $n_1$ points with $\nu_i=2$ over which $\phi$ branches, 
\item $n_2=n-n_1$ points with $\nu_i=\infty$ (over which $\phi$ needs not branching).  
\end{itemize}

In the case $\nu=2$, i.e. $n=n_1$ and $n_2=0$, we have already seen that $\tilde g\le 2$,
and thus $n\le 2g+2\le 6$. By hyperbolicity, we must have $n\ge 5$ and we get only two possibilities:
$X$ is an orbifold with $5$ or $6$ conical points $\nu_i=2$ and $\phi:\tilde X\to X$
is a genus $\tilde g=2$ branching over all conical points. {\bf We get examples of sections \ref{SecGenus2} and \ref{SecUncompleteGenus2} respectively}.

Let us now assume $n_2\not=0$ and thus $\nu=\infty$. Coming back to
(\ref{equa:INEGALITEaires}) more carefuly, together with (\ref{equa:INEGALITEbranchements}), we get
$$n_1+2n_2+\tilde g\le 2+n$$
but since $n=n_1+n_2$, we finally get
$$n_2+\tilde g\le 2.$$
Using hyperbolicity assumption (and $n\ge3$), we find the following solutions.
\begin{itemize}
\item  $\tilde g=1$, $n_2=1$ and $3\le n_1\le 4$, 
\item  $\tilde g=0$, $n_2=2$ and $n_1=2$.
\end{itemize}
In the first case, we decompose
\begin{itemize}
\item  $n_1=4$, $\phi$ branches precisely over these $4$ points and $\tilde n=2$, 
\item  $n_1=3$, $\phi$ branches over these $3$ points and one free, and $\tilde n=2$, 
\item  $n_1=3$, $\phi$ branches over $4$ orbifold points and $\tilde n=1$. 
\end{itemize}
{\bf We respectively get examples of sections \ref{SecBielliptic1}, \ref{SecUncompleteTwicePuncturedTorus} and \ref{SecLame}.}
In the second case, $\phi$ branches over the two orbifold points of order $2$ and $\tilde n=4$ {\bf and we get example of section \ref{SecQuadratic}}.

\subsection{Degree $d=3$} We can assume orbifold points of $3$ types:
\begin{itemize}
\item  $\nu_i=2$ and $\phi$ branches at the order $2$ over this point; therefore, the preimage 
consists in one regular point (critical for $\phi$) and a copy of the orbifold point.
\item  $\nu_i=3$ and $\phi$ branches at order $3$ over this point; therefore, the preimage
consists in one regular point (critical for $\phi$).
\item  $\nu_i=\infty$ and $\phi$ is arbitrary over this point; the preimage 
consists in $1$, $2$ or $3$ copies of this point.
\end{itemize}
Denote by $n_2$, $n_3$ and $n_\infty$ the number of these points respectively, $n_2+n_3+n_\infty=n$.
A combination of (\ref{equa:INEGALITEaires}) together with (\ref{equa:INEGALITEbranchements})
yields (with above notations)
$$\tilde g+n+n_\infty=\tilde g+n_2+n_3+2n_\infty\le4$$
This gives us $n=4$ and $\tilde g=n_\infty=0$. But in this case, the only orbifold points
up-stairs have order $2$ and there are at most $4$ such points. This contradict hyperbolicity assumption.

\subsection{Degree $d=4$} We can assume orbifold orders of $4$ types:
\begin{itemize}
\item  $\nu_i=2$ and $\phi$ branches at least once at order $2$ over this point; then the preimage 
consiste consists in one regular point (critical for $\phi$) and either a second one, or two copies of the orbifold point.
\item  $\nu_i=3$ and $\phi$ branches atb order $3$ over this point; then the preimage 
consiste consists in one regular point (critical for $\phi$) and a  copy of the orbifold point.
\item  $\nu_i=4$ and $\phi$ branches at order $4$ over this point; then the preimage 
consiste consists in one regular point (critical for $\phi$).
\item  $\nu_i=\infty$ and $\phi$ is arbitrary over this point; therefore, the preimage
consists in  $1$, $2$, $3$ or $4$ copies of this point.
\end{itemize}
Denote by $n_2$, $n_3$, $n_4$ et $n_\infty$ the number of these points respectively, $n_2+n_3+n_4+n_\infty=n$.
A combination of (\ref{equa:INEGALITEaires}) together with (\ref{equa:INEGALITEbranchements})
yields (with above notations)
$$\tilde g+n_2+2n_3+2n_4+3n_\infty+\frac{\tilde n_2}{2}\le6$$
(here, $\tilde n_2$ is the number of orbifold points of $\tilde X$ over the $n_2$ points of order $2$).
By hyperbolicity, we get $n\ge4$ and, when $n=4$, at least one of the orbifold points is not of minimal order $2$,
yielding $n+n_2+n_3+n_4\ge5$. 

Assume first $n_\infty\not=0$; then, inequalities
allow the only possibility $n=4$ with $(n_2,n_3,n_4,n_\infty)=(3,0,0,1)$, $\tilde g=0$ and $\tilde n_2=0$.
We get the quartic transformation for Painlev\'e VI {\bf (see section \ref{SecQuartic})}.

Let us now assume $n_\infty=0$. Recall that we want $3\tilde g-3+\tilde n=3\tilde g-3+\tilde n_2+n_3\ge1$
if $n=4$ and $\ge2$ if $n\ge5$. From these inequalities, the only possibility is $n=4$ 
with $(n_2,n_3,n_4,n_\infty)=(3,1,0,0)$, $\tilde g=1$ and $\tilde n_2=0$. 
The covering $\phi$ branches only over the $4$ orbifold points, is totally ramified at the order $2$ over the $3$
points of order $2$ and has a single order $3$ branching point over the point of orbifold order $3$. Its monodromy, taking values into the symmetric group $\Sigma_4$, is generated by $3$ double-transpositions
$(ij)(kl)$, $\{i,j,k,l\}=\{1,2,3,4\}$, whose composition has order $3$. However, in $\Sigma_4$, double-transpositions
form a group (together with the identity) and cannot generate an order $3$ element: such a cover does not exist.

\section{From the five-punctured sphere to the twice-punctured torus}\label{SecTopTwoPuncTorus}

Fix distinct points $0,1,t,\lambda,\infty\in\mathbb P^1$, and consider the elliptic cover 
$$\phi:X_{\lambda}:=\{y^2=x(x-1)(x-\lambda)\}\to\mathbb P^1_x\ ;\ (x,y)\mapsto x;$$
denote by $\{t_1,t_2\}:=\phi^{-1}(t)$ the preimage of the fifth point (mind that we change notations).
The orbifold fundamental group of $\mathbb P^1\setminus\{0,1,t,\lambda,\infty\}$ is defined by
$$\Gamma:=\left\langle\gamma_0,\gamma_1,\gamma_t,\gamma_\lambda,\gamma_\infty\ \vert\ 
\gamma_0\gamma_1\gamma_t\gamma_\lambda\gamma_\infty=\gamma_0^2=\gamma_1^2=\gamma_\lambda^2=\gamma_\infty^2=1\right\rangle.$$
On the other hand, the fundamental group of the twice punctured torus $X_{\lambda}\setminus\{t_1,t_2\}$ 
is given by
$$\tilde\Gamma:= \left\langle \alpha, \beta, \delta_1, \delta_2\ \vert\  \alpha\beta=\delta_1\beta\alpha\delta_2\right\rangle.$$ 
The elliptic cover induces a natural monomorphism 
$$\phi_*\ :\ \tilde\Gamma\to\Gamma$$
identifying $\tilde\Gamma$ with an index two subgroup of $\Gamma$: the subgroup generated by $\gamma_t$ 
and words of even length in letters $\gamma_0,\gamma_1,\gamma_\lambda,\gamma_\infty$. In fact,
a careful study of the topological cover yields

\begin{lemma}\label{L1}
The morphism $\phi_*$ is defined by 
$$\left\{ \begin{array}{lll}
\phi_*(\alpha) &= &\tilde\gamma_1\cdot \tilde\gamma_t\cdot\tilde\gamma_\lambda\\
\phi_*(\beta)& =& \tilde\gamma_\lambda\cdot \tilde\gamma_\infty\\
\phi_*(\delta_{1})& =& \tilde\gamma_{t}\\
\phi_*(\delta_{2}) &= &\tilde\gamma_{\infty}\cdot \gamma_{t}\cdot \gamma^{-1}_{\infty}
\end{array}\right.$$
\end{lemma}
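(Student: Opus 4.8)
The plan is to compute the monomorphism $\phi_*$ explicitly by a hands-on analysis of the two-fold cover $\phi:X_\lambda\to\mathbb P^1_x$, realizing both $\Gamma$ and $\tilde\Gamma$ as concrete fundamental groups via chosen basepoints, loops and cuts. First I would fix a basepoint $x_0\in\mathbb P^1\setminus\{0,1,t,\lambda,\infty\}$ with the two preimages $\tilde x_0^{\pm}\in X_\lambda$, and choose a standard "bouquet" of simple loops $\gamma_0,\gamma_1,\gamma_t,\gamma_\lambda,\gamma_\infty$ around the five punctures, ordered cyclically so that $\gamma_0\gamma_1\gamma_t\gamma_\lambda\gamma_\infty=1$, with the ordering chosen so that the branch points $0,1,\lambda,\infty$ (over which $\phi$ ramifies) and the unramified point $t$ interleave in the way needed for the formulas. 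I would then pick branch cuts for $\phi$ — for instance pairing $0$ with $1$ and $\lambda$ with $\infty$ — so that $X_\lambda$ is two copies of the cut sphere glued along the cuts, and track how each sheet-lift of a small loop behaves: a loop $\gamma_i$ around a branch point $x_i\in\{0,1,\lambda,\infty\}$ lifts to a path swapping the two sheets, while a loop around $t$ lifts to two disjoint loops $\tilde\gamma_t$ (around $t_1$) on one sheet.

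Next I would use the standard dictionary between index-two subgroups and the sheet-swapping homomorphism $\epsilon:\Gamma\to\mathbb Z/2\mathbb Z$ sending each $\gamma_i$ ($i\ne t$) to $1$ and $\gamma_t$ to $0$; then $\phi_*(\tilde\Gamma)=\ker\epsilon$ is exactly the subgroup described in the excerpt (words of even length in $\gamma_0,\gamma_1,\gamma_\lambda,\gamma_\infty$ together with $\gamma_t$). Using a lift of the chosen bouquet I would write down an explicit Schreier-type generating set for $\ker\epsilon$: natural candidates are $\tilde\gamma_1\tilde\gamma_t\tilde\gamma_\lambda$, $\tilde\gamma_\lambda\tilde\gamma_\infty$, $\tilde\gamma_t$ and a conjugate $\tilde\gamma_\infty\gamma_t\tilde\gamma_\infty^{-1}$ (here tildes denote the distinguished lifts starting at $\tilde x_0^+$, and untilded $\gamma_t$ its lift on the other sheet, i.e. the loop around $t_2$). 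The task is then to identify these with the free-homotopy classes of the four chosen generators $\alpha,\beta,\delta_1,\delta_2$ of $\tilde\Gamma$: $\delta_1,\delta_2$ should be the loops around the two punctures $t_1,t_2$, and $\alpha,\beta$ should be a symplectic-type pair of loops generating $H_1$ of the torus. I would check that the proposed images satisfy the single defining relation $\alpha\beta=\delta_1\beta\alpha\delta_2$ of $\tilde\Gamma$ after substitution — unwinding it using $\gamma_0\gamma_1\gamma_t\gamma_\lambda\gamma_\infty=1$ and the involution relations $\gamma_i^2=1$ — and that the map they define is an isomorphism onto $\ker\epsilon$ (e.g. by comparing with a presentation of $\ker\epsilon$ obtained from Reidemeister–Schreier, or by an Euler-characteristic/abelianization count confirming rank and then surjectivity on generators).

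The main obstacle I expect is bookkeeping: getting the cyclic order of the punctures, the choice of branch cuts, the placement of the basepoint relative to the cuts, and the orientation conventions all mutually consistent, so that the lifts of the $\gamma_i$ compose in exactly the pattern that yields the stated four formulas rather than some basis-changed or conjugated variant. In particular the asymmetry between the last two formulas — $\phi_*(\delta_1)=\tilde\gamma_t$ on one sheet versus $\phi_*(\delta_2)=\tilde\gamma_\infty\cdot\gamma_t\cdot\gamma_\infty^{-1}$, a conjugate of the lift of $\gamma_t$ on the other sheet — reflects the fact that the two lifts $t_1,t_2$ of the puncture $t$ are connected through the branch structure, and pinning down the precise conjugating word $\tilde\gamma_\infty$ (as opposed to $\tilde\gamma_0$, $\tilde\gamma_1\tilde\gamma_\lambda$, etc.) requires careful tracing of a path in $X_\lambda$ from the neighbourhood of $t_1$ to that of $t_2$ across the chosen cut. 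Once the conventions are fixed, verifying the formulas is a routine check that the single relation is respected and that the induced map is bijective onto the index-two subgroup; I would present the covering picture (cf.\ figure \ref{dessin6}) to make the loop-tracing transparent and relegate the relation-checking to a short computation.
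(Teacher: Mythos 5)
Your proposal is correct and follows essentially the same route as the paper: fix a basepoint with its two lifts $\tilde p,\tilde p'$, lift each $\gamma_i$ ($i=0,1,\lambda,\infty$) to a pair of half-loops exchanging the sheets and $\gamma_t$ to two genuine loops, read off $\alpha,\beta,\delta_1,\delta_2$ from the picture, and verify the relation $\alpha\beta=\delta_1\beta\alpha\delta_2$ using $\tilde\gamma_i\cdot\tilde\gamma_i'=1$ together with the two lifts of $\gamma_0\gamma_1\gamma_t\gamma_\lambda\gamma_\infty=1$. The Reidemeister--Schreier framing you add is just a more systematic packaging of the index-two subgroup description the paper states before the lemma; the substance of the argument is identical.
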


One easily check the compatibility between relations defining $\Gamma$ and $\tilde\Gamma$.

\begin{proof} If $p\in\mathbb P^1\setminus\{0,1,t,\lambda,\infty\}$ denotes the base point used to compute
the fundamental group on the sphere, denote by $\tilde p$ and $\tilde p'$ the two lifts on the elliptic curve.
For $i=0,1,\lambda,\infty$, the loop $\gamma_i$ lifts as paths (half loops)
\begin{itemize}
\item $\tilde\gamma_i$ from $\tilde p$ to $\tilde p'$,
\item $\tilde\gamma_i'$ from $\tilde p'$ to $\tilde p$.
\end{itemize}
On the other hand, the loop $\gamma_t$ lifts as loops
\begin{itemize}
\item $\tilde\gamma_t$ based at $\tilde p$,
\item $\tilde\gamma_t'$based at $\tilde p'$.
\end{itemize}
Then, carefully drawing the picture, we get 
$$\left\{ \begin{array}{lll}
\alpha &= &\tilde\gamma_1\cdot \tilde\gamma'_t\cdot\tilde\gamma'_\lambda\\
\beta& =& \tilde\gamma_\lambda\cdot \tilde\gamma'_\infty\\
\delta_{1}& =& \tilde\gamma_{t}\\
\delta_{2} &= &\tilde\gamma_{\infty}\cdot \gamma'_{t}\cdot \gamma^{-1}_{\infty}
\end{array}\right.$$
We check that these loops indeed satisfy $\alpha\beta=\delta_1\beta\alpha\delta_2$
by using relations
$$\tilde\gamma_i\cdot\tilde\gamma'_i=1\ \ \ \text{for}\ \ \ i=0,1,\lambda,\infty$$
and those which lift as $\gamma_0\circ\gamma_1\circ\gamma_t\circ\gamma_{\lambda}\circ\gamma_{\infty}=1$
namely
$$\tilde\gamma_0\circ\tilde\gamma'_1\circ\tilde\gamma_t\circ\tilde\gamma_{\lambda}\circ\tilde\gamma'_{\infty}=1
\ \ \ \text{and}\ \ \ 
\tilde\gamma'_0\circ\tilde\gamma_1\circ\tilde\gamma'_t\circ\tilde\gamma'_{\lambda}\circ\tilde\gamma_{\infty}=1.$$
We get the result by projection on $\mathbb P^1_x$.
\end{proof}

 \begin{lemma}\label{L2} 
 The unique elliptic involution of $X_{t_1}$ that permutes  $t_1$ and $t_2$ acts as follows on the fundamental group:
 $$ \begin{array}{ccccc}\alpha&\leftrightarrow&\alpha^{-1}\\
 \beta&\leftrightarrow&\beta^{-1}\\
\gamma_1&\leftrightarrow&\gamma_2\end{array}$$
\end{lemma}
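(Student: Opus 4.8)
The plan is to identify the "elliptic involution of $X_{t_1}$ that permutes $t_1$ and $t_2$" concretely and then compute its action on the generators $\alpha,\beta,\gamma_1,\gamma_2$ of $\tilde\Gamma$ using the explicit description of $\phi_*$ from Lemma \ref{L1}. The curve $X_{t_1}=X_\lambda=\{y^2=x(x-1)(x-\lambda)\}$ carries its hyperelliptic (elliptic) involution $\iota:(x,y)\mapsto(x,-y)$, which is the deck transformation of $\phi$; this is precisely the automorphism that fixes the four Weierstrass points over $x=0,1,\lambda,\infty$ and swaps the two points $t_1,t_2$ in $\phi^{-1}(t)$. So the statement is really about how the deck transformation of $\phi$ acts on $\pi_1(X_\lambda\setminus\{t_1,t_2\})$.

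First I would fix, as in the proof of Lemma \ref{L1}, the base point $\tilde p$ and its conjugate $\tilde p'=\iota(\tilde p)$, and record that $\iota$ exchanges all the lifted half-loops and loops: $\iota(\tilde\gamma_i)=\tilde\gamma_i'$ for $i=0,1,\lambda,\infty$ and $\iota(\tilde\gamma_t)=\tilde\gamma_t'$ (up to the base-point change, i.e. after conjugating by a fixed path from $\tilde p$ to $\tilde p'$). Using $\tilde\gamma_i\tilde\gamma_i'=1$, the map $\iota$ on $\tilde\Gamma$ thus sends each $\tilde\gamma_i$ to (a conjugate of) $\tilde\gamma_i^{-1}$. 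Then I would feed this into the formulas of Lemma \ref{L1}: since $\alpha=\tilde\gamma_1\tilde\gamma_t'\tilde\gamma_\lambda'$ (using the half-loop version), applying $\iota$ and reversing order gives $\iota(\alpha)$ a conjugate of $\tilde\gamma_\lambda\tilde\gamma_t\tilde\gamma_1=(\tilde\gamma_1\tilde\gamma_t'\tilde\gamma_\lambda')^{-1}$ read appropriately, hence $\iota(\alpha)=\alpha^{-1}$ in $\tilde\Gamma$; similarly $\iota(\beta)=\beta^{-1}$. For the punctures: $\delta_1=\tilde\gamma_t$ is based at $\tilde p$ and $\iota$ carries it to $\tilde\gamma_t'$ based at $\tilde p'$, which — transported back by the chosen path — is exactly $\delta_2$ (up to conjugacy/inversion matching the convention), giving $\gamma_1\leftrightarrow\gamma_2$ with the notation $\gamma_1:=\delta_1$, $\gamma_2:=\delta_2$.

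The main obstacle is bookkeeping the base-point ambiguity: $\iota$ does not fix $\tilde p$, so a priori it only defines an outer action on $\pi_1$, and turning the path-level identities into honest group identities requires a consistent choice of connecting path and care with orientations and the order of concatenation. The cleanest way around this is to note that the statement is only asserted "up to the relation $\alpha\beta=\delta_1\beta\alpha\delta_2$" and up to simultaneous conjugation, so it suffices to exhibit the automorphism on generators and check it preserves this single relation; indeed $\iota(\alpha\beta)=\iota(\alpha)\iota(\beta)=\alpha^{-1}\beta^{-1}=(\beta\alpha)^{-1}$ while $\iota(\delta_1\beta\alpha\delta_2)$ should reduce, after using the relation, to the same element, which is a short verification. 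I would also double-check the fixed-point count — $\iota$ has four fixed points on $X_\lambda$, all outside $\{t_1,t_2\}$, confirming uniqueness of such an involution among automorphisms of the punctured torus — so that the word "unique" in the statement is justified, and then conclude.
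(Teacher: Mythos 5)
Your proposal follows essentially the same route as the paper's proof: identify the involution as the deck transformation $(x,y)\mapsto(x,-y)$, observe that it swaps $\tilde p\leftrightarrow\tilde p'$ and $\tilde\gamma_i\leftrightarrow\tilde\gamma_i'$, transport back to the base point along a chosen connecting path (the paper uses $\tilde\gamma_\infty$), and verify $\alpha\mapsto\alpha^{-1}$, $\beta\mapsto\beta^{-1}$, $\delta_1\leftrightarrow\delta_2$ using $\tilde\gamma_i\tilde\gamma_i'=1$ and the lifted relations. One small caveat: the involution preserves (not reverses) concatenation order, so $\iota(\alpha)=\tilde\gamma_1'\tilde\gamma_t\tilde\gamma_\lambda$, and its identification with a conjugate of $\alpha^{-1}$ is the ``direct computation using relations'' step rather than a formal order reversal; this does not affect the correctness of the plan.
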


We note that the relation $\alpha\beta=\delta_1\beta\alpha\delta_2$ is indeed invariant by the involution.

\begin{proof}We have to take care that the base point $\tilde p$ is not fixed.
In fact, the involution permutes $\tilde p$ and $\tilde p'$ and acts on  $\gamma_i$ lifts as follows 
$$\tilde\gamma_i\leftrightarrow\tilde\gamma'_i\ \ \ \text{for}\ \ \ i=0,1,t,\lambda,\infty.$$
In particular, if we denote
$$\left\{ \begin{array}{lll}
\alpha' &= &\tilde\gamma'_1\cdot \tilde\gamma_t\cdot\tilde\gamma_\lambda\\
\beta'& =& \tilde\gamma'_\lambda\cdot \tilde\gamma_\infty
\end{array}\right.$$ 
then involution acts on these loops as 
$$\alpha\leftrightarrow\alpha'\ \ \ \text{and}\ \ \  \beta\leftrightarrow\beta'.$$
We bring back these new loops to the base point $\tilde p$ by conjugating (for instance) with $\tilde\gamma_\infty$, which gives us
$$ \begin{array}{ccccc}\alpha&\leftrightarrow&\tilde\gamma_\infty\cdot\alpha'\cdot\tilde\gamma_\infty^{-1}\\
 \beta&\leftrightarrow&\tilde\gamma_\infty\cdot\beta^{-1}\cdot\tilde\gamma_\infty^{-1}\\
\tilde\gamma_t&\leftrightarrow&\tilde\gamma_\infty\cdot\tilde\gamma'_t\cdot\tilde\gamma_\infty^{-1}\end{array}$$
We thus get $\delta_1\leftrightarrow\delta_2$ and, by a direct computation, using relations between $\tilde\gamma_i$
and $\tilde\gamma'_i$, we check that $\alpha\leftrightarrow\alpha^{-1}$ and  $\beta\leftrightarrow\beta^{-1}$.
\end{proof}

In order to prove Theorem \ref{ThmBirational}, it is enough to prove that the map $\Phi_\theta$ is dominant,  generically two-to-one.
By the Riemann-Hilbert correspondance, it is equivalent to work with the corresponding spaces of monodromy representations.
Let us denote by $\mathcal R_\theta$ the space of monodromy representations for $\mathcal M_{0,5}(\frac{1}{2},\frac{1}{2},\frac{1}{2},\theta,\frac{1}{2})$:
$$\mathcal R_\theta:=\left\{(M_0,M_1,M_t,M_\lambda,M_\infty)\in\SL_2(\C)^5\ ;\ \ \ 
\begin{array}{c}M_0M_1M_tM_\lambda M_\infty=I\\
\mathrm{trace}(M_i)=0\ \text{for}\ i=0,1,\lambda,\infty\\ 
\mathrm{trace}(M_t)=2\cos(\pi\theta)
\end{array}\right\}/\sim$$
where the equivalence relation $\sim$ is the diagonal adjoint action by $\SL_2(\C)$ on quintuples.
Recall that, in $\SL_2(\C)$, we have 
$$\mathrm{trace}(M)=0\Leftrightarrow M^2=-I$$
and the corresponding $\mathrm{PSL}_2(\C)$-representations are actually representations
$$\Gamma\to\mathrm{PSL}_2(\C).$$
On the other hand, consider the space $\tilde{\mathcal R}_{\theta}$ of monodromy representations of $\mathcal M_{1,2}(\theta,\theta)$
$$\tilde{\mathcal R}_\theta:=\left\{(A,B,D_1,D_2)\in\SL_2(\C)^4\ ;\ \ \ 
\begin{array}{c}AB=D_1 BA D_2\\
\mathrm{trace}(D_1)=\mathrm{trace}(D_2)=2\cos(\pi\theta)
\end{array}\right\}/\sim$$
The natural map ${\phi_1}^*:\mathcal R_\theta\to\tilde{\mathcal R_\theta}$ induced by ${\phi_1}$ is described by

\begin{cor}\label{CorMA}
We have ${\phi_1}^*(M_0,M_1,M_t,M_\lambda,M_\infty)=(A,B,D_1,D_2)$ with
$$\left\{ \begin{array}{ccc}
 A& =& M_1 M_t M_{\lambda},\\
 B &=& M_\lambda M_\infty,\\
 D_1& =& M_{t},\\
 D_2& = &M_\infty M_t M^{-1}_\infty.\end{array} \right.$$
\end{cor}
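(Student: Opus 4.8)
The plan is to obtain the corollary directly from Lemma~\ref{L1} through the Riemann--Hilbert correspondence. A point of $\mathcal R_\theta$ is represented by a tuple $(M_0,M_1,M_t,M_\lambda,M_\infty)\in\SL_2(\C)^5$ with the stated product relation and traces; since $\mathrm{trace}(M_i)=0$ forces $M_i^2=-I$ for $i\in\{0,1,\lambda,\infty\}$, this tuple induces a representation $\Gamma\to\mathrm{PSL}_2(\C)$ sending each $\gamma_i$ to the class of $M_i$ and $\gamma_t$ to the class of $M_t$. Lifting the corresponding connection along $\phi_1$ and erasing the apparent singular points over $0,1,\lambda,\infty$ (possible precisely because their local monodromy is scalar) leaves the monodromy unchanged as a $\mathrm{PSL}_2(\C)$-representation; hence ${\phi_1}^*$ is nothing but precomposition with the monomorphism $\phi_*$ computed in Lemma~\ref{L1}.

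It then suffices to substitute the four words furnished by Lemma~\ref{L1}. Each $\tilde\gamma_i$ appearing there is a lift of the loop $\gamma_i$, hence is mapped by $\phi_*$ to $\gamma_i$ and by the representation to $M_i$; likewise $\gamma_t\mapsto M_t$ and $\gamma_\infty\mapsto M_\infty$. From $\phi_*(\alpha)=\gamma_1\gamma_t\gamma_\lambda$, $\phi_*(\beta)=\gamma_\lambda\gamma_\infty$, $\phi_*(\delta_1)=\gamma_t$ and $\phi_*(\delta_2)=\gamma_\infty\gamma_t\gamma_\infty^{-1}$ one reads off exactly $A=M_1M_tM_\lambda$, $B=M_\lambda M_\infty$, $D_1=M_t$ and $D_2=M_\infty M_tM_\infty^{-1}$, which is the asserted formula.

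It remains to check that this quadruple genuinely defines a point of $\tilde{\mathcal R}_\theta$ and that the assignment descends to conjugacy classes. The latter is clear, since conjugating the $M_i$ by $g\in\SL_2(\C)$ conjugates $A,B,D_1,D_2$ by $g$; and the trace conditions are immediate, as $D_1=M_t$ has trace $2\cos(\pi\theta)$ by hypothesis and $D_2$ is conjugate to $M_t$. For the relation $AB=D_1BAD_2$ I would expand both sides and simplify using $M_\lambda^2=M_\infty^2=M_0^2=-I$ together with $M_0M_1M_tM_\lambda M_\infty=I$, checking that both reduce to $-M_1M_tM_\infty$. I expect the only mildly delicate point to be that this last identity must hold with the exact sign in $\SL_2(\C)$, and not merely up to $\pm I$: it is the $\SL_2$-refinement of the compatibility of the defining relations of $\Gamma$ and $\tilde\Gamma$ noted just after Lemma~\ref{L1}, and once that sign is pinned down everything else is a formal consequence of Lemma~\ref{L1}.
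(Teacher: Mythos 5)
Your proposal is correct and follows the same route as the paper: read the formulas off from Lemma~\ref{L1}, then verify that the relation $AB=D_1BAD_2$ holds with the exact sign in $\SL_2(\C)$ (not merely up to $\pm I$) together with the trace conditions. Your sign check is right --- both sides do reduce to $-M_1M_tM_\infty$ using $M_\lambda^2=M_\infty^2=M_0^2=-I$ and $M_0M_1M_tM_\lambda M_\infty=I$ --- and in fact you carry out this verification more explicitly than the paper, which only asserts that the sign works out.
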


\begin{proof}
From Lemma \ref{L1}, we know that $AB=\pm D_1 B A D_2$; we just have to check that we have the right sign,
and thus a representation
$$\pi_1(X_\lambda\setminus\{t_1,t_2\})\to\SL_2(\C)$$
and we must have 
$\mathrm{trace}(D_1)=\mathrm{trace}(D_2)=2\cos(\pi\theta)\ \ (=\mathrm{trace}(M_t))$.
\end{proof}

We now want to prove that the map  ${\phi_1}^*:\mathcal R_\theta\to\tilde{\mathcal R_\theta}$ just defined is generically one-to-one.
This follows from the following

\begin{thm}\label{R0}
Let $A,B,D_1,D_2\in\SL_2(\C)$ such that
$$AB=D_1 B A D_2\ \ \ \text{and}\ \ \ D_1,D_2\not=\pm I.$$
Assume moreover that the subgroup $<A,B>$ generated by $A$ and $B$ is irreducible, i.e. without common eigendirection. 
Then there is a matrix $M\in\SL_2(\C)$, unique up to a sign, such that
$$MAM^{-1}=A^{-1},\ \ \ MBM^{-1}=B^{-1}\ \ \ \text{and}\ \ \ MD_1M^{-1}=D_2.$$
Moreover, $M^2=-I$ and $(A,B,D_1,D_2)={\phi_1}^*(M_0,M_1,M_t,M_\lambda,M_\infty)$ for
$$\left\{ \begin{array}{ccc}
 M_0&=& -AM\\
 M_1&=&ABD_2^{-1}M\\
 M_t&=&D_1\\
 M_\lambda&=&-BM\\
 M_\infty&=&M
 \end{array}\right.$$
\end{thm}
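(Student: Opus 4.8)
The plan is to first produce the involution $M$ by a direct argument, then verify the remaining claims by computation. Start from the relation $AB = D_1 BA D_2$. Since $\langle A,B\rangle$ is irreducible, the pair $(A,B)$ is rigid: any two irreducible representations of the free group with the same pair $(A,B)$ coincide, but more useful here is that the simultaneous conjugation problem $XAX^{-1}=A^{-1}$, $XBX^{-1}=B^{-1}$ has at most one solution up to scalar. Indeed, if $M$ and $M'$ both conjugate $(A,B)$ to $(A^{-1},B^{-1})$, then $M^{-1}M'$ commutes with both $A$ and $B$, hence is scalar by irreducibility; in $\SL_2$ that means $M'=\pm M$. So uniqueness up to sign will be automatic once existence is established. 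For existence, I would argue that $(A^{-1},B^{-1})$ is conjugate to $(A,B)$: one checks they have the same character (same traces of $A$, $B$, $AB$, since $\mathrm{trace}(X)=\mathrm{trace}(X^{-1})$ in $\SL_2$), and two irreducible $\SL_2(\C)$-representations with the same character are conjugate; this produces $M\in\mathrm{GL}_2(\C)$ with $MAM^{-1}=A^{-1}$, $MBM^{-1}=B^{-1}$, and rescaling puts $M\in\SL_2(\C)$, still unique up to sign.

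Next I would pin down $M^2$. Since $M^2$ conjugates $(A,B)$ to itself ($A\mapsto A^{-1}\mapsto A$, similarly $B$), irreducibility forces $M^2=\pm I$. To rule out $M^2=I$: if $M^2=I$ then $M$ is diagonalizable with eigenvalues $\pm 1$, and being in $\SL_2$ it is either $\pm I$ (impossible, as then $A=A^{-1}$, $B=B^{-1}$, contradicting irreducibility unless the group is very small — in fact $A^2=B^2=I$ would make $\langle A,B\rangle$ dihedral or reducible) or conjugate to $\mathrm{diag}(1,-1)$, i.e. $\mathrm{trace}(M)=0$ already; I need instead to show $\mathrm{trace}(M)=0$ is actually forced. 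The cleanest route is: $M$ being an involution of the pair $(A,B)$ up to inversion, consider $N:=MA$; then $N A N^{-1} = MA\,A\,A^{-1}M^{-1}=MAM^{-1}=A^{-1}$ and one computes $N^2 = MAMA = A^{-1}M^2 A$, so $N^2=\pm I$ according to $M^2=\pm I$. A parity/trace bookkeeping using that the quintuple $M_0,\ldots,M_\infty$ defined in the statement must land in $\SL_2(\C)$ with the prescribed order-zero traces will then force the sign $M^2=-I$; concretely, $M_0=-AM$, $M_\lambda=-BM$, $M_\infty=M$ must all square to $-I$, and $M_\infty^2=M^2$, so $M^2=-I$ is exactly the consistency condition, which I would verify is compatible (rather than derive it independently) — alternatively derive it from $D_1\neq\pm I$ and $D_1=D_2$ conjugate via $M$.

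Then I would establish $MD_1M^{-1}=D_2$: this does not follow from irreducibility of $\langle A,B\rangle$ alone but must be extracted from the defining relation $AB=D_1BAD_2$. Conjugating this relation by $M$ gives $A^{-1}B^{-1} = (MD_1M^{-1})\,B^{-1}A^{-1}\,(MD_2M^{-1})$; on the other hand, inverting the original relation gives $B^{-1}A^{-1}=D_2^{-1}A^{-1}B^{-1}D_1^{-1}$, equivalently $A^{-1}B^{-1}=D_1^{-1}\cdot(A^{-1}B^{-1})$-conjugated... I would manipulate these two identities to get that $(MD_1M^{-1}, MD_2M^{-1})$ and $(D_2,D_1)$ solve the same equation relative to the rigid pair $(A^{-1},B^{-1})$, and conclude $MD_1M^{-1}=D_2$ (the other possibility $MD_1M^{-1}=D_1$ being excluded by $D_1\neq\pm I$ together with $M$-conjugation acting as inversion on the ambient pair — or by a trace argument). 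The final step is the bookkeeping check that with $M_0,M_1,M_t,M_\lambda,M_\infty$ as listed, we have $M_i^2=-I$ for $i=0,1,\lambda,\infty$, $\mathrm{trace}(M_t)=\mathrm{trace}(D_1)=2\cos(\pi\theta)$, the product $M_0M_1M_tM_\lambda M_\infty=I$, and that applying Corollary \ref{CorMA} (i.e. the formulas $A=M_1M_tM_\lambda$, $B=M_\lambda M_\infty$, $D_1=M_t$, $D_2=M_\infty M_t M_\infty^{-1}$) recovers the original $(A,B,D_1,D_2)$ — this last part is a direct substitution using $M^2=-I$ and $MAM^{-1}=A^{-1}$, $MBM^{-1}=B^{-1}$, $MD_1M^{-1}=D_2$.

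The main obstacle I anticipate is the step extracting $MD_1M^{-1}=D_2$ from the braid-type relation $AB=D_1BAD_2$: irreducibility gives rigidity of $(A,B)$ but the relation couples $D_1$ and $D_2$ in a way that needs the hypothesis $D_1,D_2\neq\pm I$ precisely to avoid the degenerate branch, and getting the signs right there (as opposed to concluding only up to $\pm$) requires care. The existence of $M$ via equality of characters is standard, and the final verification is routine matrix algebra once the structural facts are in place.
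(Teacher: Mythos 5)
Your treatment of $M$ itself (existence via equality of characters of the irreducible pairs $(A,B)$ and $(A^{-1},B^{-1})$, uniqueness up to sign via Schur) is exactly the paper's Lemma \ref{LemBasic2} and Corollary \ref{CorBasic}. But you get tangled on $M^2=-I$ for no reason: in $\SL_2(\C)$ a matrix with $M^2=I$ is diagonalizable with eigenvalues in $\{\pm1\}$ of product $1$, hence equals $\pm I$; the alternative you worry about, ``conjugate to $\mathrm{diag}(1,-1)$'', has determinant $-1$ and cannot occur. Since $M=\pm I$ would give $A^{-1}=MAM^{-1}=A$, hence $A=\pm I$, contradicting irreducibility, the only option is $M^2=-I$. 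Your proposed fallback of reading the sign off from the consistency of the quintuple $M_0,\dots,M_\infty$ is circular, since those formulas presuppose the theorem.

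The genuine gap is the step $MD_1M^{-1}=D_2$, which you correctly identify as the crux but do not prove. Conjugating $AB=D_1BAD_2$ by $M$ and inverting the original relation only tells you that the pairs $\left(MD_1M^{-1},MD_2M^{-1}\right)$ and $(D_2,D_1)$ both satisfy $A^{-1}B^{-1}=XB^{-1}A^{-1}Y$; but that equation has a three-parameter family of solutions ($X$ arbitrary, $Y$ then determined), so ``solving the same equation'' pins down nothing, and the dichotomy you offer ($MD_1M^{-1}=D_2$ or $=D_1$) is not exhaustive. The unspecified ``trace argument'' is precisely where the content lies. The paper's route: the desired identity is equivalent to $(BAMD_1)^2=-I$, i.e.\ to $\mathrm{trace}(BAMD_1)=0$; one computes $(BAM)^2=BAB^{-1}A^{-1}M^2=-[A,B]^{-1}$, rewrites the hypothesis as $[A,B]=D_1\cdot(BA)D_2(BA)^{-1}$ so that $(BAM)^2D_1=-\big((BA)D_2(BA)^{-1}\big)^{-1}$ has trace $-\mathrm{trace}(D_2)=-\mathrm{trace}(D_1)$, and then the Fricke identity $\mathrm{trace}(M_1M_2)+\mathrm{trace}(M_1M_2^{-1})=\mathrm{trace}(M_1)\,\mathrm{trace}(M_2)$ applied to $M_1=BAM$ and $M_2=BAMD_1$ yields $0=\mathrm{trace}(BAM)\cdot\mathrm{trace}(BAMD_1)$; finally $\mathrm{trace}(BAM)\neq0$, since otherwise $(BAM)^2=-I$ would force $[A,B]=I$ against irreducibility. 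Without this (or an equivalent) computation your argument does not close.
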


First recall well-known results concerning $\SL_2(\C)$.

\begin{lemma}\label{LemBasic1} Two matrices $A,B\in\SL_2(\C)$ generate a reducible group if, and only if, 
$\mathrm{trace}[A,B]=2$ where $[A,B]=ABA^{-1}B^{-1}$ is the commutator.
\end{lemma}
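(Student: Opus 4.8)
The plan is to prove both implications, interpreting ``reducible'' as the existence of a common eigenvector (equivalently, as in Theorem \ref{R0}, a common eigendirection). For the easy direction, if $A$ and $B$ share an eigenvector I would conjugate both simultaneously into upper-triangular form; then $[A,B]=ABA^{-1}B^{-1}$ is again upper-triangular, and its two diagonal entries are $a\cdot b\cdot a^{-1}\cdot b^{-1}=1$, where $a,a^{-1}$ and $b,b^{-1}$ are the eigenvalues of $A$ and $B$. Hence $\mathrm{trace}[A,B]=2$ with no computation needed.

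For the converse I would argue by contraposition, normalising $A$ by conjugation (which alters neither reducibility nor $\mathrm{trace}[A,B]$) and then computing the commutator trace explicitly. Write $B=\begin{pmatrix} p & q \\ r & s\end{pmatrix}$ with $ps-qr=1$. If $A$ is diagonalisable with distinct eigenvalues, take $A=\mathrm{diag}(a,a^{-1})$ with $a\neq\pm1$; a direct product computation gives $\mathrm{trace}[A,B]-2=-qr(a-a^{-1})^2$. Here the eigendirections of $A$ are $e_1,e_2$, and the pair is reducible precisely when $B$ fixes $e_1$ or $e_2$, i.e. when $qr=0$; since $(a-a^{-1})^2\neq0$ this matches $\mathrm{trace}[A,B]=2$ exactly. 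If instead $A$ is not diagonalisable, then $A=\pm U$ with $U$ a nontrivial unipotent, and since $[-A,B]=[A,B]$ the commutator is insensitive to the sign, so I may take $A=\begin{pmatrix}1&1\\0&1\end{pmatrix}$; the analogous computation yields $\mathrm{trace}[A,B]-2=r^2$. Now $A$ has the single eigendirection $e_1$, so reducibility means $B$ fixes $e_1$, i.e. $r=0$, again matching $\mathrm{trace}[A,B]=2$.

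The only remaining possibility is $A=\pm I$, in which case every line is $A$-invariant, so any eigenvector of $B$ is common and the group is reducible, while $[A,B]=I$ indeed has trace $2$; this is consistent. I would frame the whole argument through Cayley--Hamilton, noting that for $M\in\SL_2(\C)$ one has $\mathrm{trace}(M)=2$ iff $(M-I)^2=0$, so that the claim is exactly the assertion that $[A,B]$ is unipotent iff $A,B$ have a common eigendirection. The main (and essentially only) obstacle is bookkeeping: checking that the chosen normal forms for $A$ exhaust all conjugacy classes and that in each the factor displayed vanishes precisely on the reducible locus; the underlying algebra is a short, routine $2\times2$ matrix multiplication.
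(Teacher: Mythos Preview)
Your proof is correct. Both directions are handled cleanly, the case split on the Jordan type of $A$ is exhaustive, and the two commutator-trace computations $\mathrm{trace}[A,B]-2=-qr(a-a^{-1})^2$ and $\mathrm{trace}[A,B]-2=r^2$ check out and identify the reducible locus precisely.

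Your route, however, differs from the paper's. You put $A$ into Jordan normal form and compute case by case; the paper instead assumes irreducibility, picks an eigenvector $v$ of the product $AB$ (which then cannot be an eigenvector of $A$ or of $B$), and uses the basis $(v,-\gamma Bv)$ to put \emph{both} matrices simultaneously into the shape
\[
A=\begin{pmatrix}a&-1\\1&0\end{pmatrix},\qquad B=\begin{pmatrix}0&\gamma^{-1}\\-\gamma&b\end{pmatrix},
\]
from which it reads off the Fricke identity $\mathrm{trace}[A,B]=a^2+b^2+c^2-abc-2$ with $a=\mathrm{trace}(A)$, $b=\mathrm{trace}(B)$, $c=\mathrm{trace}(AB)$. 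Your argument is more elementary and avoids this normalisation trick, but the paper's approach buys something extra: that simultaneous normal form and the Fricke formula are exactly what is reused in the next two results (Lemma~\ref{LemBasic2} and Corollary~\ref{CorBasic}, where the explicit matrix $M$ in \eqref{InvolMatrix} is written in these coordinates). So if you adopt your proof here, you would need a separate derivation of those ingredients downstream.
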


\begin{proof}If $A$ et $B$ have a common eigenvector, then we can assume $<A,B>$ is triangular and the commutator
will be a unipotent matrix, thus having trace $2$. Conversely, assume that $A$ and $B$ have no common eigenvector. 
Therefore, an eigenvector $v$ for $AB$ will not be eigenvector for $A$ or for $B$. If $ABv=\gamma v$, then in the base $(v,-\gamma Bv)$, 
matrices take the form
$$A=\left(\begin{array}{cc}
a&-1\\
1&0
\end{array}\right)\ \ \ \text{and}\ \ \ B=\left(\begin{array}{cc}
0&\frac{1}{\gamma}\\
-\gamma&b
\end{array}\right)$$
where $a=\mathrm{trace}(A)$ and $b=\mathrm{trace}(B)$.
We check that
$$[A,B]=\begin{pmatrix}
a^2+b^2+\gamma^2-abc&\gamma^{-2}(a-b\gamma)\\
a-b\gamma^{-1}&\gamma^{-2}\end{pmatrix}$$
and thus 
$$\mathrm{trace}([A,B])=a^2+b^2+c^2-abc-2,\ \ \ c=\gamma+\gamma^{-1}=\mathrm{trace}(AB).$$
Finally, these matrices $A$ and $B$ have a common eigenvector if, and only if, $a^2+b^2+c^2-abc-2=2$.
\end{proof}

\begin{lemma}\label{LemBasic2} Let $A,B,A',B'\in\SL_2(\C)$ and assume $\mathrm{trace}[A,B]\not=2$. There exists $M\in\SL_2(\C)$ such that 
$$MAM^{-1}=A'\ \ \ \text{and}\ \ \ MBM^{-1}=B'$$ if, and only if, 
$$\mathrm{trace}(A)=\mathrm{trace}(A'),\ \ \ \mathrm{trace}(B)=\mathrm{trace}(B')\ \ \ \text{and}\ \ \ \mathrm{trace}(AB)=\mathrm{trace}(A'B').$$
\end{lemma}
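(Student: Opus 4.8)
The ``only if'' direction is immediate, since the trace is invariant under conjugation. For the converse, the plan is to put both pairs $(A,B)$ and $(A',B')$ into the explicit normal form already obtained in the proof of Lemma~\ref{LemBasic1}, and to read the conjugating matrix off from there.

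First I would check that the hypothesis $\mathrm{trace}[A,B]\not=2$ forces \emph{both} $\langle A,B\rangle$ and $\langle A',B'\rangle$ to be irreducible. For $\langle A,B\rangle$ this is exactly Lemma~\ref{LemBasic1}. For $\langle A',B'\rangle$, I would invoke the Fricke-type identity extracted from the computation in the proof of Lemma~\ref{LemBasic1},
$$\mathrm{trace}[X,Y]=\mathrm{trace}(X)^2+\mathrm{trace}(Y)^2+\mathrm{trace}(XY)^2-\mathrm{trace}(X)\,\mathrm{trace}(Y)\,\mathrm{trace}(XY)-2,$$
valid for every pair $X,Y\in\SL_2(\C)$ (it is established there for irreducible pairs, and, both sides being polynomial in the matrix entries, it extends to all pairs; alternatively one checks it by hand on triangular pairs). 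Applying it to $(A,B)$ and to $(A',B')$ and using the three trace equalities gives $\mathrm{trace}[A',B']=\mathrm{trace}[A,B]\not=2$, so $\langle A',B'\rangle$ is irreducible by Lemma~\ref{LemBasic1}.

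Next I would run the normal form construction of Lemma~\ref{LemBasic1} on both pairs. Write $a=\mathrm{trace}(A)=\mathrm{trace}(A')$, $b=\mathrm{trace}(B)=\mathrm{trace}(B')$, $c=\mathrm{trace}(AB)=\mathrm{trace}(A'B')$. Pick an eigenvector $v$ of $AB$, say $ABv=\gamma v$; irreducibility guarantees that $v$ is an eigenvector of neither $A$ nor $B$, so $(v,-\gamma Bv)$ is a basis, and in this basis $(A,B)$ becomes the explicit pair of matrices displayed in the proof of Lemma~\ref{LemBasic1}, which depends only on $a$, $b$ and $\gamma$ (rescaling $v$ rescales the whole basis and changes nothing). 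Since $AB\in\SL_2(\C)$, the number $\gamma$ is a root of $z^2-cz+1=0$. Doing the same for $A'B'$, whose eigenvalues are the \emph{same} two roots, I may choose an eigenvector $v'$ of $A'B'$ with eigenvalue $\gamma'=\gamma$, and then $(A',B')$ takes the very same normal form. Hence there is $P\in\mathrm{GL}_2(\C)$ with $PAP^{-1}=A'$ and $PBP^{-1}=B'$, and replacing $P$ by $P/\sqrt{\det P}$ (a square root exists since $\C$ is algebraically closed and $\det P\not=0$) produces the required $M\in\SL_2(\C)$.

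The only genuinely delicate point is the bookkeeping around the two eigenvalues $\gamma$ and $\gamma^{-1}$ of $AB$: the normal form really does depend on which of the two is selected, so one must align the choice made for $AB$ with the one made for $A'B'$, which is possible precisely because $\mathrm{trace}(AB)=\mathrm{trace}(A'B')$ makes the two unordered eigenvalue pairs coincide. The non-diagonalizable case ($c=\pm2$ with $AB\not=\pm I$, which does occur, since $AB=\pm I$ would give $\mathrm{trace}[A,B]=2$) is harmless, because then $\gamma=\gamma'=\pm1$ is forced.
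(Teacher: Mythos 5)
Your proof is correct and follows essentially the same route as the paper, which simply observes that the lemma is a consequence of the normal form computed in the proof of Lemma~\ref{LemBasic1}; you have merely spelled out the details (irreducibility of $\langle A',B'\rangle$ via the Fricke trace identity, alignment of the eigenvalue choice $\gamma$ versus $\gamma^{-1}$, and the determinant normalization), all of which check out.
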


\begin{proof}
This is a consequence of formulae from the preceeding proof.
\end{proof}

\begin{cor}\label{CorBasic} If $\mathrm{trace}[A,B]\not=2$, then there exists $M\in\SL_2(\C)$, unique up to a sign, such that 
$$MAM^{-1}=A^{-1}\ \ \ \text{and}\ \ \ MBM^{-1}=B^{-1}.$$
Moreover, $M^2=-I$.
\end{cor}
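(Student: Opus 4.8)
The plan is to deduce the statement from Lemmas \ref{LemBasic1} and \ref{LemBasic2}, which is why it is phrased as a corollary. First I would establish \emph{existence} by applying Lemma \ref{LemBasic2} with $(A',B')=(A^{-1},B^{-1})$: the hypothesis $\mathrm{trace}[A,B]\neq 2$ is precisely what allows that lemma to be invoked, and the three required trace equalities hold for free in $\SL_2(\C)$. Indeed, $\mathrm{trace}(A)=\mathrm{trace}(A^{-1})$ and $\mathrm{trace}(B)=\mathrm{trace}(B^{-1})$ because for $A\in\SL_2(\C)$ one has $A^{-1}=\mathrm{adj}(A)$ with the same diagonal entries swapped, and $\mathrm{trace}(AB)=\mathrm{trace}((BA)^{-1})=\mathrm{trace}(A^{-1}B^{-1})$. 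Lemma \ref{LemBasic2} then produces $M\in\SL_2(\C)$ with $MAM^{-1}=A^{-1}$ and $MBM^{-1}=B^{-1}$.

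Next I would treat \emph{uniqueness up to sign}. If $M_1,M_2$ both satisfy the two conjugation relations, then $M_1M_2^{-1}$ commutes with $A$ and $B$, hence with the whole group $\langle A,B\rangle$. By Lemma \ref{LemBasic1}, the condition $\mathrm{trace}[A,B]\neq 2$ means exactly that $\langle A,B\rangle$ is irreducible, and an element of $\SL_2(\C)$ commuting with an irreducible subgroup is $\pm I$ (any eigenline of $M_1M_2^{-1}$ would otherwise be preserved by $\langle A,B\rangle$). Thus $M_1M_2^{-1}=\pm I$, i.e. $M_1=\pm M_2$.

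Finally, for the identity $M^2=-I$: from $MAM^{-1}=A^{-1}$ we get $M^2AM^{-2}=MA^{-1}M^{-1}=(MAM^{-1})^{-1}=A$, and likewise $M^2BM^{-2}=B$, so $M^2$ commutes with $\langle A,B\rangle$ and hence $M^2=\pm I$ by the same irreducibility argument. If $M^2=I$, then every eigenvalue of $M$ lies in $\{\pm1\}$; combined with $\det M=1$ this forces $M=\pm I$, whence $A=A^{-1}$, $B=B^{-1}$, so $A^2=B^2=I$ and (using $\det=1$ again) $A,B\in\{\pm I\}$, making $[A,B]=I$ and contradicting $\mathrm{trace}[A,B]\neq2$. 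Hence $M^2=-I$. There is no real obstacle here; the only point requiring a little care is invoking Lemma \ref{LemBasic1} to pass from the trace hypothesis to irreducibility, and noting that $\det M=1$ is what rules out $M^2=I$.
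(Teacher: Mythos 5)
Your proof is correct and follows essentially the same route as the paper: existence via Lemma \ref{LemBasic2} applied to $(A',B')=(A^{-1},B^{-1})$ using the trace identities, and $M^2=\pm I$ from the fact that $M^2$ commutes with the irreducible group $\langle A,B\rangle$, with $M^2=I$ excluded because it would force $M=\pm I$ and hence $A=A^{-1}$, contradicting irreducibility. The only difference is that you spell out the uniqueness-up-to-sign argument explicitly, which the paper leaves implicit (it instead exhibits the matrix $M$ in normal form).
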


\begin{proof} It suffices to notice that $\mathrm{trace}(A)=\mathrm{trace}(A^{-1})$ and $\mathrm{trace}(AB)=\mathrm{trace}(BA)$ 
for all matrices $A,B\in\SL_2(\C)$. We deduce, under our assumptions, that
$$\mathrm{trace}(A)=\mathrm{trace}(A^{-1}),\ \ \ \mathrm{trace}(B)=\mathrm{trace}(B^{-1})\ \ \ \text{and}\ \ \ \mathrm{trace}(AB)=\mathrm{trace}(A^{-1}B^{-1}).$$
Therefore, there exists an $M$ satisfying the first part of the statement. But $M^2$ has to commute to $A$ and $B$. 
Thus $M^2$ must fix all eigendirections of all elements of the group $<A,B>$. There are at least three distinct such directions
and  $M^2$ is projectively the identity: $M^2=\pm I$. But $M=\pm I$ is impossible since $MAM^{-1}=A^{-1}\not=A$ ($A\not=\pm I$ otherwise $<A,B>$ would be reductible). Thus $M^2\not=I$ and $M^2=-I$. If matrices $A$ and $B$ are given in the normal form like in the proof above, 
then $M$ is given by
\begin{equation}\label{InvolMatrix}
M=\pm\begin{pmatrix}
\frac{\gamma^2-1}{2\gamma}&\frac{a-b\gamma}{2\gamma}\\ 
\frac{a\gamma-b}{2}&-\frac{\gamma^2-1}{2\gamma}\end{pmatrix}
\end{equation}
\end{proof}

\begin{proof}[Proof of Theorem \ref{R0}]
We want now to prove that the unique (up to a sign) matrix $M$ satisfying
$$MAM^{-1}\ \ \ \text{and}\ \ \ MBM^{-1}$$
also satisfy
$$MD_1M^{-1}=D_2\ \ \ \text{and thus}\ \ \ MD_2M^{-1}=D_1$$
($M^2=-I$). From relation $AB=D_1BAD_2$, this is equivalent to
$$AB=D_1BAMD_1M^{-1}\Leftrightarrow (BAMD_1)^2=-I\Leftrightarrow \mathrm{trace}(BAMD_1)=0.$$
Rewrite the relation $AB=D_1BAD_2$ into the form 
$$[A,B]=D_1BAD_2A^{-1}B^{-1}=D_1D'_2\ \ \ \text{with}\ \ \ D'_2=(BA)D_2(BA)^{-1}.$$
Note that
$$(BAM)^2=BAMBAM=BAB^{-1}A^{-1}M^2=-BAB^{-1}A^{-1}=-[A,B]^{-1}$$
and therefore $(BAM)^2D_1=-(D'_2)^{-1}$ and
$$\mathrm{trace}((BAM)^2D_1)+\mathrm{trace}(D_1)=0.$$
Now, recall that in $\SL_2(\C)$ we have universal relations 
$$\mathrm{trace}(M_1M_2)+\mathrm{trace}(M_1M_2^{-1})=\mathrm{trace}(M_1)\cdot\mathrm{trace}(M_2).$$
Applying this to $M_1=BAM$ and $M_2=BAMD_1$, we get
$$0=\mathrm{trace}((BAM)^2D_1)+\mathrm{trace}(D_1)=\mathrm{trace}(BAMD_1)\cdot\mathrm{trace}(BAM).$$
But, $\mathrm{trace}(BAM)\not=0$ otherwise $(BAM)^2=-[A,B]^{-1}=-I$, i.e. $[A,B]=I$, that would contradict irreducibility. 
Thus $\mathrm{trace}(BAMD_1)=0$, what we wanted to prove. Finally, we easily check that matrices $M_i$ given by the statement
are indeed inversing preceeding formulae of Lemma \ref{CorMA} by using relation $AB=D_1BAD_2$ and properties of $M$.
\end{proof}

\section{Bielliptic covers}\label{sectionMonodromieBielliptique}

Let us now assume $\theta=0$ and rewrite
$$\tilde{\mathcal R}_{1/2}:=\left\{(A,B,C_1,C_2)\in\SL_2(\C)^4\ ;\ \ \ 
\begin{array}{c}[A,B]=C_1C_2\\
\mathrm{trace}(C_1)=\mathrm{trace}(C_2)=0
\end{array}\right\}/\sim$$
where we have modified generators of the fundamental group for convenience:
$$C_1=D_1\ \ \ \text{and}\ \ \ C_2=(BA)^{-1} D_2 (BA).$$
This is the monodromy space of those connections on the elliptic curve $X_\lambda$ having logarithmic poles
with exponent $\frac{1}{2}$ at $t_1$ and $t_2$. Let us now consider the $2$-fold ramified cover
$$\pi:\tilde X_{t,\lambda}\to X_\lambda$$
ramifying over $t_1$ and $t_2$ and let us study the associated map 
$$\pi^*:\mathcal M_{1,2}(\frac{1}{2},\frac{1}{2})\to\mathcal M_{2,0}$$
on the monodromy side of the Riemann-Hilbert correspondance.
Denote by 
$$\mathcal R':=\left\{(A_1,B_1,A_1,B_2)\in\SL_2(\C)^4\ ;\ \ \ [A_1,B_1][A_2,B_2]=I\right\}/\sim$$
the space of monodromy representations associated to $\mathcal M_{2,0}$.
Then we get a map 
$$\pi^*:\tilde{\mathcal R}_{1/2}\longrightarrow \mathcal R'$$
which is given by (see also \cite{Machu})

\begin{lemma}\label{L3}
We have ${\pi}^*(A,B,C_1,C_2)=(A_1,B_1,A_1,B_2)$ with
$$\left\{ \begin{array}{ccc}
 A_1& =& A,\\
 B_1 &=& B,\\
 A_2 & =& C_1^{-1}AC_1,\\
 B_2 & = & C_1^{-1} BC_1.\end{array} \right.$$
\end{lemma}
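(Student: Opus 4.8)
The plan is to compute $\pi^*$ by a topological analysis, in the spirit of Lemma~\ref{L1} and Corollary~\ref{CorMA}. First I would describe the double cover $\pi:\tilde X_{t,\lambda}\to X_\lambda$ through its classifying data: away from the ramification points it restricts to the connected unramified double cover of $X_\lambda\setminus\{t_1,t_2\}$ associated to the homomorphism $\pi_1(X_\lambda\setminus\{t_1,t_2\})\to\mathbb Z/2\mathbb Z$ that is trivial on the torus generators $A,B$ and nontrivial on the two meridians $C_1,C_2$; this respects the relation $[A,B]=C_1C_2$ since both sides map to $0$. Filling in the punctures, $\pi$ ramifies at order $2$ over each of $t_1,t_2$, so $\tilde X_{t,\lambda}$ has genus $2$ by Riemann--Hurwitz. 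Because $A$ and $B$ lie in the kernel, each lifts to a loop based at a chosen lift $\tilde p$ of the base point; call these lifts $A_1,B_1$. The meridian $C_1$ lifts to a path $\tilde C_1$ from $\tilde p$ to the second point $\tilde p'$ of the fibre, so $A,B$ also lift to loops based at $\tilde p'$; conjugating these by $\tilde C_1$ yields loops $A_2,B_2$ based at $\tilde p$. A careful drawing of the cover, exactly as in the proof of Lemma~\ref{L1}, then gives $\pi_*(A_1)=A$, $\pi_*(B_1)=B$, $\pi_*(A_2)=C_1^{-1}AC_1$ and $\pi_*(B_2)=C_1^{-1}BC_1$.

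Next I would verify that $(A_1,B_1,A_2,B_2)$ is a standard generating system of $\pi_1(\tilde X_{t,\lambda})$. The surface $\tilde X_{t,\lambda}\setminus\pi^{-1}\{t_1,t_2\}$ has genus $2$ and $2$ punctures, hence free fundamental group of rank $5$; it is generated by $A_1,B_1,A_2,B_2$ together with the meridian $E_1$ of one ramification point, and in the standard presentation the boundary loops satisfy $[A_1,B_1][A_2,B_2]E_1E_2=1$, where $E_1,E_2$ map under $\pi_*$ to conjugates of $C_1^2$ and $C_2^2$. Passing to $\pi_1(\tilde X_{t,\lambda})$ kills $E_1,E_2$ and yields the surface-group presentation $\langle A_1,B_1,A_2,B_2\mid[A_1,B_1][A_2,B_2]=1\rangle$. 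Now I pull back the monodromy $\rho$ of $(E,\nabla)$: since $\mathrm{trace}(C_i)=0$ we have $\rho(C_i)^2=-I$, so $\pi^*\nabla$ has scalar local monodromy $-I$ at each ramification point; as in Section~\ref{SecKnownConstructions} these singular points are apparent and are removed by a birational gauge transformation together with a rank-$1$ twist chosen to kill exactly these two local monodromies. That twist is trivial around $A_i,B_i$, so the normalised representation $\rho\circ\pi_*$ descends to $\pi_1(\tilde X_{t,\lambda})\to\SL_2(\C)$ with values $A,B,C_1^{-1}AC_1,C_1^{-1}BC_1$ on $A_1,B_1,A_2,B_2$, which is the claimed formula.

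It remains to record the compatibility with the relation defining $\mathcal R'$, which simultaneously pins down the sign of the twist: using $[A,B]=C_1C_2$ and $C_i^2=-I$,
\[
[A,B]\,[C_1^{-1}AC_1,\,C_1^{-1}BC_1]=[A,B]\,C_1^{-1}[A,B]\,C_1=C_1C_2\cdot C_1^{-1}C_1C_2C_1=C_1C_2^{2}C_1=C_1(-I)C_1=-C_1^{2}=I,
\]
so $(A,B,C_1^{-1}AC_1,C_1^{-1}BC_1)$ lies in $\mathcal R'$ and $\pi^*$ is well defined. The main obstacle is the first paragraph: extracting the precise words $\pi_*(A_2)=C_1^{-1}AC_1$, and in particular the exact conjugating element and the side on which it acts, from a careful picture of the ramified cover, just as was done by hand in Lemma~\ref{L1}. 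Once that picture is in place, the surface-group presentation is forced by the Euler-characteristic count and the above identity is forced by the trace-zero hypothesis.
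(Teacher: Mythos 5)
The paper states Lemma~\ref{L3} without proof (it only points to \cite{Machu}), so there is no argument of the authors to compare yours with; the closest model is the proof of Lemma~\ref{L1}, and your proposal follows exactly that template. Your argument is essentially correct: classifying the double cover by a homomorphism $\pi_1(X_\lambda\setminus\{t_1,t_2\})\to\mathbb Z/2\mathbb Z$, lifting the generators, removing the two apparent points by a rank-one twist that can be chosen trivial on the handles (possible because the product of the two lifted meridians is a product of commutators, so the constraint is only $\chi(E_1)\chi(E_2)=1$, compatible with $\chi(E_i)=-1$), and above all the verification
$$[A,B]\,[C_1^{-1}AC_1,\,C_1^{-1}BC_1]=[A,B]\cdot C_1^{-1}[A,B]C_1=C_1C_2\cdot C_2C_1=C_1C_2^{2}C_1=-C_1^{2}=I,$$
which is the real content: it pins down the sign and shows the tuple lands in $\mathcal R'$. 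The deferred ``careful drawing'' is at the same level of rigour as the paper's own proof of Lemma~\ref{L1}, so I accept it.

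One point deserves more care than you give it. There are four double covers of $X_\lambda$ branched over $\{t_1,t_2\}$, distinguished by the value of the classifying homomorphism on $(\alpha,\beta)$, and the formula of Lemma~\ref{L3} forces the choice $(0,0)$ --- the one you take --- since it requires both $\alpha$ and $\beta$ to lift to loops. But the literal fibre product $X_\lambda\times_{\P^1}X_t$ of section~\ref{SecBielliptic2} is \emph{not} that cover: by Lemma~\ref{L1}, $\phi_*\beta=\gamma_\lambda\gamma_\infty$ meets the branch locus $\{0,1,t,\infty\}$ of $\phi_2$ in exactly one letter ($\gamma_\infty$), so $\beta$ does not lift to a loop of that cover. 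The cover realizing the stated formula is instead the fibre product of $X_\lambda$ with the elliptic curve branched over $\{0,t,\lambda,\infty\}$; it is taken to the cover of section~\ref{SecBielliptic2} by a permutation of the four order-two points, which is harmless here since all four exponents equal $\frac{1}{2}$. This is a mismatch of conventions that the paper leaves implicit rather than a flaw in your derivation, but you should either state explicitly which of the four covers you mean, or check that Theorem~\ref{ThBielliptic} (which uses your normalization, $C_1=M$ conjugating $(A,B)$ to $(A_2,B_2)$) is insensitive to the choice.
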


Conversely, we can characterize the image of $\pi^*$ as follows

\begin{thm}\label{ThBielliptic} Let $A_1,B_1,A_2,B_2\in\SL_2(\C)$ such that
$$[A_1,B_1][A_2,B_2]= I.$$
Assume that there exists a matrix $M\in\SL_2(\C)$ such that 
$$MA_1M^{-1}=A_2,\ \ \ MB_1M^{-1}=B_2\ \ \ \text{and}\ \ \ M^2=-I.$$
Then $(A_1,B_1,A_2,B_2)=\pi^*(A,B,C_1,C_2)$ for 
$$\left\{\begin{array}{ccc}
 A& =& A_1,\\ 
 B &=& B_1,\\
  C_1& =& M\\  
  C_2 &= &M^{-1}[A_1,B_1],
  \end{array}\right.$$
If moreover 
$$\mathrm{trace}[A_1,B_1]\not=2$$
then $(A_1,B_1,A_2,B_2)$ is in the image of $\pi\circ\phi$, i.e. comes from a representation
of the $5$-punctured sphere.
\end{thm}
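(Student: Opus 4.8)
The plan is to prove the two assertions of the statement in turn. For the first assertion the approach is constructive: I would set $A=A_1$, $B=B_1$, $C_1=M$ and $C_2=M^{-1}[A_1,B_1]$ as proposed and check, first, that $(A,B,C_1,C_2)$ lies in $\tilde{\mathcal R}_{1/2}$ and, second, that $\pi^*$ sends it to $(A_1,B_1,A_2,B_2)$. The relation $[A,B]=C_1C_2$ is immediate, since $C_1C_2=M\cdot M^{-1}[A_1,B_1]=[A_1,B_1]=[A,B]$, and $\mathrm{trace}(C_1)=0$ follows from $M^2=-I$ by Cayley--Hamilton. For $\mathrm{trace}(C_2)=0$ I would observe that $[A_2,B_2]=M[A_1,B_1]M^{-1}$ by equivariance of the commutator, so the relation $[A_1,B_1][A_2,B_2]=I$ reads $MKM^{-1}=K^{-1}$ with $K=[A_1,B_1]$; this gives $KM^{-1}=M^{-1}K^{-1}$, hence $C_2^2=M^{-1}KM^{-1}K=M^{-2}=-I$, so $\mathrm{trace}(C_2)=0$. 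Finally, since $M^2=-I$ we have $M^{-1}=-M$, so conjugation by $M^{-1}$ agrees with conjugation by $M$; thus $C_1^{-1}AC_1=M^{-1}A_1M=MA_1M^{-1}=A_2$ and similarly $C_1^{-1}BC_1=B_2$, which by Lemma~\ref{L3} is exactly $\pi^*(A,B,C_1,C_2)=(A_1,B_1,A_2,B_2)$.

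For the second assertion the key point is that, once $\mathrm{trace}[A_1,B_1]\neq 2$, the element $(A,B,C_1,C_2)\in\tilde{\mathcal R}_{1/2}$ just produced lies in the image of $\Phi_{1/2}=\phi_1^*$, and this is precisely Theorem~\ref{R0} taken at $\theta=\frac{1}{2}$. I would rewrite $(A,B,C_1,C_2)$ in the presentation $(A,B,D_1,D_2)$ used in section~\ref{SecTopTwoPuncTorus} (with $D_1=C_1$ and $D_2$ conjugate to $C_2$ by $BA$), note that $\mathrm{trace}(D_1)=\mathrm{trace}(D_2)=0$ forces $D_1,D_2\neq\pm I$, and observe that $\langle A,B\rangle=\langle A_1,B_1\rangle$ is irreducible by Lemma~\ref{LemBasic1} exactly because $\mathrm{trace}[A_1,B_1]\neq 2$. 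Theorem~\ref{R0} then provides $(M_0,M_1,M_t,M_\lambda,M_\infty)\in\mathcal R_{1/2}$ with $\phi_1^*(M_0,M_1,M_t,M_\lambda,M_\infty)=(A,B,D_1,D_2)=(A,B,C_1,C_2)$. Composing with the first assertion, $(A_1,B_1,A_2,B_2)=\pi^*\bigl(\phi_1^*(M_0,M_1,M_t,M_\lambda,M_\infty)\bigr)$ lies in the image of $\pi\circ\phi$, hence comes from a representation of the five-punctured sphere.

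I do not expect a genuine obstacle: the statement essentially repackages Theorem~\ref{R0}, and all the work is bookkeeping. Three points need care. First, the matrix $M$ in the hypothesis is the ``internal'' involution $C_1=D_1$ of the twice-punctured-torus data, and must be kept distinct from the involution $M_\infty$ manufactured by Theorem~\ref{R0}, which conjugates $A$ to $A^{-1}$, $B$ to $B^{-1}$ and $D_1$ to $D_2$. Second, the sign bookkeeping forced by $M^2=-I$, in particular the identity $M^{-1}=-M$ that makes conjugation by $M$ and by $M^{-1}$ coincide. Third, keeping the two presentations of $\tilde{\mathcal R}_{1/2}$ aligned, namely the ``$AB=D_1BAD_2$'' form of section~\ref{SecTopTwoPuncTorus} and the ``$[A,B]=C_1C_2$'' form of section~\ref{sectionMonodromieBielliptique}, and tracking the conjugation by $BA$ that relates $D_2$ and $C_2$.
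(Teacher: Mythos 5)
Your argument is correct: the first assertion is the direct verification that $(A,B,C_1,C_2)$ lies in $\tilde{\mathcal R}_{1/2}$ (with $\mathrm{trace}(C_2)=0$ following from $MKM^{-1}=K^{-1}$ for $K=[A_1,B_1]$, hence $C_2^2=M^{-2}=-I$) and maps to $(A_1,B_1,A_2,B_2)$ under Lemma~\ref{L3}, and the second assertion reduces to Theorem~\ref{R0} at $\theta=\tfrac12$ via Lemma~\ref{LemBasic1}, exactly as intended. The paper states this theorem without an explicit proof (only the two following remarks), and your write-up supplies the natural argument it leaves implicit, handling the sign and change-of-generators bookkeeping correctly.
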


\begin{remark}From Lemma \ref{LemBasic2}, we see that existence of $M$ is almost equivalent to
$$\mathrm{trace}(A_1)=\mathrm{trace}(A_2)=:a\ \ \ \text{and}\ \ \ \mathrm{trace}(B_1)=\mathrm{trace}(B_2)=:b.$$
To apply the Lemma, we just need to prove that the two traces $c_i:=A_iB_i$ coincide for $i=1,2$.
But the relation $[A_1,B_1][A_2,B_2]= I$ implies that the two commutators are inverse to each other,
and thus share the same trace. By the commutator trace formula in the proof of Lemma \ref{LemBasic1}, we get
$$(c_1-c_2)(c_1+c_2-ab)=0.$$
The image of $\pi^*$ has codimension $2$ in $\mathcal R'$. We also see that generic fibers of $\pi^*$ consist in $2$ points.
\end{remark}

\begin{remark}
If we fix $A_1$ and $B_1$ generic, we obtain:
\begin{enumerate}
\item the set $\{M\in \SL_2(\C)\ ;\  \textrm{$M^2=-I $ and $M$ conjugates $[A_1, B_1] $ to its inverse}\}$ has dimension $1$,
\item the set $\{ A_1, B_1, M^{-1}.A_1.M, M^{-1}.B_1.M\}$ has also dimension $1$ up to conjugacy.
\end{enumerate}
Thus we can freely choose $(A_1,B_1)$ in the image of $\pi^*$.
\end{remark}

\end{document}